\newtheorem{dfn}{Definition}[section]
 \newtheorem{them}[dfn]{Theorem}
 \newtheorem{lem}[dfn]{Lemma}
 \newtheorem{prp}[dfn]{Proposition}
\newtheorem{cor}[dfn]{Corollary}
 \newtheorem{cla}[dfn]{Claim}
 \newtheorem{Que}[dfn]{Question}
\title{Existence of a positive hyperbolic Reeb orbit in three spheres with finite free group actions}
\author{Taisuke SHIBATA\thanks{Research Institute for Mathematical Sciences, Kyoto University, Kyoto 606-8502,
JAPAN. E-mail address: shibata@kurims.kyoto-u.ac.jp}}
\date{\today}
\begin{document}

\maketitle
%
%
%
%
%
%
%
%
%



\begin{abstract}
Let $(Y,\lambda)$ be a non-degenerate contact three manifold.  D. Cristfaro-Gardiner, M. Hutchings and  D. Pomerleano showed that if $c_{1}(\xi=\mathrm{Ker}\lambda)$ is torsion, then the Reeb vector field of $(Y,\lambda)$ has infinity many Reeb orbits otherwise $(Y,\lambda)$ is a lens space or three sphere with exaxtly two simple elliptic orbits. In the same paper, they also showed that if $b_{1}(Y)>0$, $(Y,\lambda)$ has a simple positive hyperbolic orbit directly from the isomorhphism between Seiberg-Witten Floer homology and Embedded contact homology. In addition to this, they asked whether $(Y,\lambda)$ with infinity many simple orbits also has a positive hyperbolic orbit under $b_{1}(Y)=0$. In the present paper, we answer this question under $Y \simeq S^{3}$ with nontrivial finite free group actions. In particular, it gives a positive answer in the case of a lens space $(L(p,q),\lambda)$ with odd $p$.
\end{abstract}



\section{Introduction}

Let $(Y,\lambda)$ be a contact three manifold and $X_{\lambda}$ be the Reeb vector field of it. That is, $X_{\lambda}$ is the unique vector field satisfying $i_{X_{\lambda}}\lambda=1$ and $d\lambda(X_{\lambda},\,\,)=0$.  A smooth map $\gamma : \mathbb{R}/T\mathbb{Z} \to Y$ is called a Reeb orbit with periodic $T$ if $\dot{\gamma} =  X_{\lambda}(\gamma)$ and simple if $\gamma$ is  a embedding map. In this paper, two Reeb orbits are considered equivalent if they differ by reparametrization.

The three-dimensional Weinstein conjecture which states that every contact closed three manifold $(Y,\lambda)$ has at least one simple periodic orbit  was shown by C. H. Taubes by using  Seiberg-Witten Floer (co)homology \cite{T1}.  After that,   D. Cristofaro-Gardiner and M. Hutchings showed that every contact closed three manifold $(Y,\lambda)$ has at least two simple periodic orbit by using Embedded contact homology(ECH) in \cite{CH}.  ECH was introduced by M. Hutchings in several papers (for example, it is briefly explained in \cite{H2}).

Let $\gamma : \mathbb{R}/T\mathbb{Z} \to Y$ be  a Reeb orbit with periodic $T$. If its return map $d\phi^{T}|_{\mathrm{Ker}\lambda=\xi} :\xi_{\gamma(0)} \to \xi_{\gamma(0)}$ has no eigenvalue 1, we call it a non-degenerate Reeb orbit and we call a contact manifold $(Y,\lambda)$ non-degenerate if all Reeb orbits are non-degenerate.

According to the eigenvalues of their return maps, non-degenerate periodic orbits are classified into three types. A periodic orbit  is negative hyperbolic if $d\phi^{T}|_{\xi}$ has eigenvalues $h,h^{-1} < 0$, positive hyperbolic if $d\phi^{T}|_{\xi}$ has eigenvalues $h,h^{-1} > 0$ and  elliptic if $d\phi^{T}|_{\xi}$ has eigenvalues $e^{\pm i2\pi\theta}$ for some $\theta \in \mathbb{R}\backslash \mathbb{Q}$.

For a non-degenerate contact three manifold $(Y,\lambda)$, the following theorems were proved using ECH in an essential way.

\begin{them}[\cite{HT3}]\label{exacttwo}
 Let $(Y,\lambda)$ be a closed contact non-degenerate three manifold. Assume that there exists exactly two simple Reeb orbit, then both of them are elliptic and $Y$ is a lens space (possibly $S^{3}$).
\end{them}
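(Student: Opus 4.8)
The plan is to work inside embedded contact homology $ECH_*(Y,\lambda)$ and to exploit Taubes's isomorphism $ECH_*(Y,\lambda)\cong\widehat{HM}^{-*}(Y)$, compatibly with the splitting into spin$^c$ sectors, together with the standard structural features of monopole Floer homology: in each torsion spin$^c$ sector the group is infinitely generated, it carries a $U$-tower that is nonzero in every sufficiently extreme degree of one fixed parity, and its reduced part is finite. Write $\gamma_1,\gamma_2$ for the two simple Reeb orbits. Since an ECH generator is an orbit set $\{(\alpha_i,m_i)\}$ with $m_i=1$ whenever $\alpha_i$ is hyperbolic, the generators fall into three families: (a) all $\gamma_1^a\gamma_2^b$ with $a,b\ge 0$, if both orbits are elliptic; (b) the two ``columns'' $\{\gamma_1^a\}_{a\ge 0}$ and $\{\gamma_1^a\gamma_2\}_{a\ge 0}$ (after relabelling), if exactly one orbit is hyperbolic; (c) only $\varnothing,\gamma_1,\gamma_2,\gamma_1\gamma_2$, if both are hyperbolic. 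I will also use that in the mod~$2$ ECH grading a generator is even precisely when it contains an even number of hyperbolic simple orbits, elliptic orbits never contributing, and that the ECH index of the powers $\gamma^a$ of a single elliptic orbit grows quadratically in $a$, so that the index values of such a one-parameter family have gaps tending to infinity.

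Case (c) is excluded immediately: there would be only four ECH generators, so $ECH_*(Y,\lambda)$ would be finitely generated, contradicting infinite generation of $\widehat{HM}^{-*}(Y)$. So at least one orbit, say $\gamma_1$, is elliptic; suppose toward a contradiction that we are in case (b), with $\gamma_2$ hyperbolic. First, $b_1(Y)=0$: otherwise some $[\gamma_i]$ has infinite order, each spin$^c$ sector then contains only finitely many generators, and this contradicts infinite generation of $\widehat{HM}$ in a torsion sector. Now, in every spin$^c$ sector the chain complex has rank at most one in each integer degree, its even-degree part consisting of powers $\gamma_1^a$ (lying in a fixed coset of $\langle[\gamma_1]\rangle$) and its odd-degree part of the corresponding $\gamma_1^a\gamma_2$. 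By the quadratic growth quoted above --- and, when $[\gamma_1]\ne 0$, already because the order of $[\gamma_1]$ spaces the powers out --- the degrees occupied by the even generators miss infinitely many even integers in each sector, and likewise for the odd generators; hence $ECH_*(Y,\lambda)$ vanishes in infinitely many degrees of each parity. But $\widehat{HM}(Y,\mathfrak{s})$ for torsion $\mathfrak{s}$ is nonzero in every sufficiently extreme degree of the parity carrying its $U$-tower, and a finite reduced part cannot repair infinitely many such vanishings. This contradiction forces $\gamma_2$ to be elliptic as well.

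It remains to treat case (a) and to deduce that $Y$ is $S^3$ or a lens space. Since every ECH generator now has even degree, the ECH differential vanishes, so $ECH_*(Y,\lambda)$ is the free abelian group on $\{\gamma_1^a\gamma_2^b\}_{a,b\ge 0}$, concentrated in even degrees; via Taubes this gives $\widehat{HM}_{\mathrm{odd}}(Y)=0$, hence $b_1(Y)=0$, and a little more module-theoretic bookkeeping gives $HM_{\mathrm{red}}(Y)=0$, so $Y$ is a monopole L-space. To identify the topology I would show that the two elliptic orbits bound a global surface of section for the Reeb flow, necessarily an annulus with $\gamma_1$ and $\gamma_2$ as its boundary components, whose first-return map has no interior periodic point (there are no other Reeb orbits) and is therefore conjugate to an irrational rotation; then $Y$ is the associated mapping torus, which is $S^3$ or a lens space. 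Equivalently, one identifies the graded $\mathbb{Z}[U]$-module $ECH_*(Y,\lambda)$ with that of a lens space and invokes that an L-space of Heegaard genus one is $S^3$ or a lens space. I expect this last step --- extracting the global surface of section (equivalently, the genus-one splitting) from the Reeb dynamics together with the ECH computation --- to be the main obstacle; everything before it is bookkeeping with the ECH index and the known shape of $\widehat{HM}$.
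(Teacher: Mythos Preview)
This theorem is not proved in the present paper; it is quoted from \cite{HT3} as background. There is therefore no proof here to compare your attempt against.

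On the merits of your outline itself, there are two concrete gaps beyond the one you already flag. First, your parity statement is incorrect: by Proposition~\ref{indexbasicprop}(3) the mod~$2$ ECH grading is governed by the number of \emph{positive} hyperbolic orbits, not hyperbolic orbits in general. A negative hyperbolic orbit, like an elliptic one, lands in even degree. Consequently your case~(b) splits into two genuinely different sub-cases. If $\gamma_{2}$ is positive hyperbolic your parity argument goes through as written, but if $\gamma_{2}$ is negative hyperbolic then \emph{all} generators $\gamma_{1}^{a}$ and $\gamma_{1}^{a}\gamma_{2}$ are even, and you now have two quadratically growing families in the same parity; you must argue that their union still misses infinitely many even degrees (true by a density count, but not what you wrote). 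Second, your reduction to $b_{1}(Y)=0$ asserts that ``some $[\gamma_{i}]$ has infinite order'' whenever $b_{1}(Y)>0$; this does not follow from the hypotheses, since both classes could be torsion even when $H_{1}(Y)$ has a free part. You need a different route here, for instance via the ambiguity of the ECH index over $H_{2}(Y)$ (Proposition~\ref{indexbasicprop}(2)) or via the nonvanishing of $\widehat{HM}$ in suitable non-torsion sectors.

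Finally, you correctly identify the lens-space identification in case~(a) as the heart of the matter. In \cite{HT3} this is carried out by producing, from the $U$-map and the associated $J$-holomorphic curves, an explicit open book with annular pages bounded by $\gamma_{1}$ and $\gamma_{2}$; the monodromy is then forced to be (conjugate to) a rotation. Your sketch points in this direction but does not supply the construction, so as it stands the proposal is an outline rather than a proof.
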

\begin{them}[\cite{HCP}]
Let $(Y,\lambda)$ be a non-degenerate contact three manifold. Let $\mathrm{Ker}\lambda=\xi$. Then
\item[1.] if $c_{1}(\xi)$ is torsion, there exists “infinitely many periodic orbits, or there exists exactly two elliptic simple periodic orbits and $Y$ is diffeomorphic to a lens space(that is, it reduces to \cite{HT3}).
\item [2.]if $c_{1}(\xi)$ is not torsion, there exists at least four periodic orbit.
\end{them}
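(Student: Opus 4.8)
The plan is to argue by contradiction in part~(1) and by a homological count in part~(2), with embedded contact homology $ECH_*(Y,\xi,\Gamma)$ and Taubes' isomorphism $ECH_*(Y,\xi,\Gamma)\cong\widehat{HM}^{-*}(Y,\mathfrak s_\xi+\mathrm{PD}(\Gamma))$ (compatible with the $U$-maps on both sides) as the main engines. Recall that an $ECH$ generator is an orbit set $\alpha=\{(\alpha_i,m_i)\}$ whose $\alpha_i$ are simple Reeb orbits, with $m_i=1$ whenever $\alpha_i$ is hyperbolic; its grading is the $ECH$ index $I(\alpha)$ (an honest $\mathbb Z$-grading in case~(1), since there $c_1(\xi)$ is torsion and I would take $\Gamma=0$), and its symplectic action is $\mathcal A(\alpha)=\sum_i m_i\mathcal A(\alpha_i)$.

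For part~(1), assume $c_1(\xi)$ is torsion and that $(Y,\lambda)$ has only finitely many simple Reeb orbits $\gamma_1,\dots,\gamma_n$. With $\Gamma=0$ the group $\widehat{HM}^{-*}(Y,\mathfrak s_\xi)$ is infinitely generated and, as a $\mathbb Z[U]$-module, carries an infinite $U$-tower: nonzero classes of arbitrarily large $ECH$ grading on which $U$ acts surjectively. If every $\gamma_i$ were hyperbolic there would be at most $2^n$ generators, so $ECH$ would be finite-dimensional --- a contradiction; hence at least one $\gamma_i$ is elliptic. I would then quantify the tower via the volume property of Cristofaro-Gardiner--Hutchings--Ramos, $c_k(Y,\lambda)^2/k\to 2\,\mathrm{vol}(Y,d\lambda)$: a class $k$ steps up the tower has a representative cycle all of whose orbit sets $\alpha$ satisfy $I(\alpha)\sim 2k$ and $\mathcal A(\alpha)\le c_k\sim\sqrt{2k\,\mathrm{vol}}$. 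Encoding an orbit set as a multiplicity vector $\vec m\in\mathbb Z_{\ge 0}^n$, the action $\mathcal A(\vec m)$ is linear in $\vec m$, whereas the index formula $I=c_\tau+Q_\tau+\sum\mathrm{CZ}_\tau$ makes $I(\vec m)$ a positive-definite quadratic form to leading order (the $Q_\tau$ linking term together with the $\lfloor k\theta_i\rfloor$-asymptotics of the Conley--Zehnder terms at the elliptic orbits). Matching the constraints ``$I\sim 2k$'' and ``$\mathcal A\lesssim\sqrt{2k\,\mathrm{vol}}$'' against these growth rates forces the tower, for all large $k$, to be carried by orbit sets concentrated on a single elliptic orbit up to bounded corrections; keeping track of the $J_0$-invariant along the $U$-tower and of the resulting near-equality in the ECH index inequality, the effective number of participating simple orbits is at most two. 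If $n\ge 3$ this contradicts the concentration just obtained; if $n=2$ --- necessarily with both orbits elliptic, by the hyperbolic-count remark --- we are exactly in the hypothesis of Theorem~\ref{exacttwo}, so $Y$ is a lens space with two simple elliptic orbits; and $n\le 1$ is ruled out by the Cristofaro-Gardiner--Hutchings theorem. This yields the stated dichotomy.

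For part~(2): if $c_1(\xi)$ is not torsion then $H^2(Y;\mathbb Z)$ has positive rank, so $b_1(Y)=b_2(Y)>0$. I would invoke the fact recalled in the introduction that, via the $ECH$/Seiberg--Witten isomorphism, a three-manifold with $b_1>0$ carries a simple \emph{positive hyperbolic} Reeb orbit; combining this with the Cristofaro-Gardiner--Hutchings bound of at least two simple orbits, and with a count of $ECH$ generators across distinct classes $\Gamma$ for which $\widehat{HM}(Y,\mathfrak s_\xi+\mathrm{PD}(\Gamma))\neq 0$ --- all of them built from the same finite list of simple orbits yet representing distinct classes in $H_1(Y)$, which is impossible with three or fewer simple orbits --- upgrades the count to at least four.

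The main obstacle is the combinatorial core of part~(1): making rigorous the assertion that an infinite $U$-tower of $ECH$ classes with $I\sim 2k$ and $\mathcal A\sim\sqrt{2k\,\mathrm{vol}}$ cannot be realised by finitely many, hence three or more, simple orbits. This requires the full strength of the ECH index inequality, the partition/admissibility conditions separating the allowed multiplicities of elliptic and hyperbolic orbits, the Weyl-law asymptotics, and control of $J_0$ under the $U$-map; by contrast, the Seiberg--Witten input enters only as the black box ``$ECH$ is infinitely generated with an eventually surjective $U$-action.''
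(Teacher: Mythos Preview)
The paper does not prove this theorem: it is quoted verbatim as a background result from \cite{HCP}, with no argument supplied. There is therefore no ``paper's own proof'' to compare your proposal against.

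That said, a few remarks on your sketch relative to what actually happens in \cite{HCP}. For part~(1) your outline is in the right spirit---the genuine proof does combine the $U$-tower, the volume asymptotics of \cite{CHR}, and a quadratic-versus-linear comparison of $I$ and $\mathcal A$---but the decisive step you flag as ``the main obstacle'' is not resolved by the ingredients you list. The actual argument hinges on the $J_0$ index together with the topological constraints it places on the $U$-map holomorphic curves (genus zero, controlled ends), and this is what forces the effective reduction to two orbits; your sketch names $J_0$ but does not explain how it closes the gap.

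For part~(2) there is a genuine circularity in your proposal: you invoke the existence of a positive hyperbolic orbit when $b_1(Y)>0$, but in \cite{HCP} that statement (Theorem~\ref{posihyp} here) is a \emph{consequence} of the analysis, not an independent input. Moreover, your final counting step---``impossible with three or fewer simple orbits''---is asserted rather than argued; with three simple orbits one can certainly realise more than three homology classes in $H_1(Y)$ via orbit sets, so the claim needs a real justification. The proof in \cite{HCP} instead exploits the existence of several spin-c structures with nonvanishing monopole Floer homology and analyzes the resulting ECH generators more carefully.
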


\begin{them}[\cite{HCP}]\label{posihyp}
If $b_{1}(Y)>0$, there exists at least one positive hyperbolic orbit.
\end{them}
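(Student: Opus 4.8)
The plan is to argue by contradiction, using Taubes's isomorphism between embedded contact homology and Seiberg--Witten Floer (co)homology (the one invoked in the abstract) to turn the absence of a positive hyperbolic orbit into a statement that $\widehat{HM}$ is concentrated in a single degree $\bmod\,2$ in every $\mathrm{Spin}^{c}$ structure, which fails when $b_{1}(Y)>0$. Throughout I work with $\mathbb{Z}/2$ coefficients. Assume $(Y,\lambda)$ is non-degenerate, $b_{1}(Y)>0$, and admits no positive hyperbolic simple Reeb orbit. Recall that the ECH chain complex $ECC_{*}(Y,\lambda,\Gamma)$ in a class $\Gamma\in H_{1}(Y)$ is freely generated over $\mathbb{Z}/2$ by orbit sets $\alpha=\{(\alpha_{i},m_{i})\}$ with $m_{i}=1$ on hyperbolic orbits, and carries a canonical relative $\mathbb{Z}/2$-grading (the reduction of the ECH index $I$), with the differential $\partial^{\mathrm{ECH}}$ decreasing it by $1$. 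The structural input I would invoke is Hutchings's mod $2$ formula for the ECH index: for generators $\alpha,\beta$ in the same homology class and any relative homology class $Z$, one has $I(\alpha,\beta,Z)\equiv h_{+}(\alpha)-h_{+}(\beta)\pmod 2$, where $h_{+}(\cdot)$ counts positive hyperbolic orbits (each with multiplicity $1$).

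First I would use the hypothesis to kill the differential. Since $h_{+}\equiv 0$ on all generators, the mod $2$ index formula shows that all generators in a fixed class $\Gamma$ lie in the same residue of the $\mathbb{Z}/2$-grading; as $\partial^{\mathrm{ECH}}$ shifts this grading by $1$, it vanishes identically. Hence $ECH_{*}(Y,\lambda,\Gamma)\cong ECC_{*}(Y,\lambda,\Gamma)$ is a free $\mathbb{Z}/2$-module supported in a single $\mathbb{Z}/2$-degree, for every $\Gamma\in H_{1}(Y)$.

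Next I would extract the contradiction from Seiberg--Witten theory. By Taubes's isomorphism $ECH_{*}(Y,\lambda,\Gamma)\cong\widehat{HM}^{-*}(Y,\mathfrak{s}_{\xi}+\mathrm{PD}(\Gamma);\mathbb{Z}/2)$, the monopole Floer group $\widehat{HM}^{-*}(Y,\mathfrak{s})$ would likewise be concentrated in a single $\mathbb{Z}/2$-degree for every $\mathrm{Spin}^{c}$ structure $\mathfrak{s}$. But $Y$ is spin, so it carries a $\mathrm{Spin}^{c}$ structure $\mathfrak{s}_{0}$ with $c_{1}(\mathfrak{s}_{0})=0$, in particular torsion. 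For such $\mathfrak{s}_{0}$ the ``bar'' flavor $\overline{HM}_{*}(Y,\mathfrak{s}_{0})$ is a nonzero finitely generated $\mathbb{Z}/2[U^{-1},U]$-module whose Euler characteristic per $U$-period equals $\chi\big(H_{*}(T^{b_{1}(Y)};\mathbb{Z}/2)\big)=0$ since $b_{1}(Y)>0$; being nonzero with vanishing Euler characteristic, it is supported in both $\mathbb{Z}/2$-degrees. Because $c_{1}(\mathfrak{s}_{0})$ is torsion, the standard structure of monopole Floer homology identifies $\widehat{HM}^{-*}(Y,\mathfrak{s}_{0})$ with $\overline{HM}_{*}(Y,\mathfrak{s}_{0})$ outside a bounded range of degrees, so $\widehat{HM}^{-*}(Y,\mathfrak{s}_{0})$ is also nonzero in both $\mathbb{Z}/2$-degrees, contradicting the previous paragraph. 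Therefore a positive hyperbolic Reeb orbit must exist.

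The main obstacle is establishing the mod $2$ formula for the ECH index itself: one must combine the parities of the relative first Chern number $c_{\tau}$, the relative self-intersection $Q_{\tau}$, and the Conley--Zehnder terms (which are odd, even, and $k$-dependent for elliptic, positive hyperbolic, and negative hyperbolic orbits respectively) and check that, within a fixed homology class, $I(\alpha,\beta,Z)\bmod 2$ depends only on the difference of the positive hyperbolic orbit counts. A secondary point to handle with care is the grading convention in Taubes's isomorphism, so that ``concentrated in a single $\mathbb{Z}/2$-degree'' is transported correctly; note that the reducibles' contribution to $\overline{HM}$ need not be the naive product of tori and the $U$-tower when $b_{1}(Y)\ge 3$, but the argument uses only that $\overline{HM}(Y,\mathfrak{s}_{0})$ is nonzero and has Euler characteristic $0$ per period, both of which hold for any torsion $\mathfrak{s}_{0}$ as soon as $b_{1}(Y)>0$.
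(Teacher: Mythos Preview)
Your argument is correct and follows exactly the strategy the paper sketches for this cited result: the mod~$2$ ECH index formula forces all ECH generators into a single $\mathbb{Z}/2$-grading in the absence of positive hyperbolic orbits (so $\mathrm{ECH}_{\mathrm{odd}}=0$), while Taubes's isomorphism together with the structure of $\overline{HM}$ for a torsion $\mathrm{Spin}^{c}$ structure shows this is impossible when $b_{1}(Y)>0$. Note that what you flag as the ``main obstacle''---the mod~$2$ index formula $I(\alpha,\beta,Z)\equiv h_{+}(\alpha)-h_{+}(\beta)\pmod 2$---is not something you need to prove: it is precisely Proposition~\ref{indexbasicprop}(3) here, quoted from \cite[Proposition~1.6]{H1}.
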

In general, ECH splits into two parts $\mathrm{ECH}_{\mathrm{even}}$ and $\mathrm{ECH}_{\mathrm{odd}}$. In particular, $\mathrm{ECH}_{\mathrm{odd}}$ is the part which detects the existence of a positive hyperbolic orbit and moreover if $b_{1}(Y)>0$, we can see directly from the isomorhphism between Seiberg-Witten Floer homology and ECH (Theorem \ref{test}) that $\mathrm{ECH}_{\mathrm{odd}}$ does not vanish. Theorem \ref{posihyp} was proved by using these facts.

In contrast to the case $b_{1}(Y)>0$, if $b_{1}(Y)=0$,  $\mathrm{ECH}_{\mathrm{odd}}$ may vanish, so this method doesn't work. As a generalization of this phenomena,  D. Cristfaro-Gardiner, M. Hutchings and  D. Pomerleano asked the next question in the same paper.

\begin{Que}[\cite{HCP}]\label{quest}
Let $Y$ be a closed connected three-manifold which is not $S^3$ or a lens space, and let $\lambda$ be a nondegenerate contact form on $Y$. Does $\lambda$ have a positive hyperbolic simple Reeb orbit?
\end{Que}

The reason why the cases $S^3$ and lens spaces are excluded in Question \ref{quest} is that they admit contact forms with exactly two simple elliptic orbits as stated in Theorem \ref{exacttwo} (for example, see \cite{HT3}).  So in general, we can change the assumption of Question \ref{quest} to the one that  $(Y,\lambda)$ is not a lens space or $S^3$ with exactly two elliptic orbits (this is generic condition. see \cite{Ir}).  

For this purpose, the author proved the next theorem in \cite{S}.

\begin{them}[\cite{S}]\label{elliptic}
Let $(Y,\lambda)$ be a nondegenerate contact three manifold with $b_{1}(Y)=0$. Suppose that $(Y,\lambda)$  has infinity many simple periodic orbits (that is, $(Y,\lambda)$ is not a lens space with exactly two simple Reeb orbits) and has at least one  elliptic orbit. Then, there exists at least one simple positive hyperbolic orbit.
\end{them}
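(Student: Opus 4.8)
I would argue by contradiction, so suppose $(Y,\lambda)$ has no simple positive hyperbolic orbit. The first move is to convert this into the vanishing of the ECH differential. For an ECH generator $\alpha=\{(\alpha_i,m_i)\}$ the mod $2$ grading is $I(\alpha)\equiv\#\{i:\alpha_i\text{ positive hyperbolic}\}\pmod 2$ (this is exactly the fact underlying the $\mathrm{ECH}_{\mathrm{even}}/\mathrm{ECH}_{\mathrm{odd}}$ splitting recalled above), so under our assumption every generator has even ECH index; since the ECH differential lowers $I$ by exactly $1$, it must be identically zero. Hence, for every $\Gamma$, $\mathrm{ECH}_*(Y,\lambda,\Gamma)$ is the free $\mathbb{Z}/2$-module on the set of orbit sets representing $\Gamma$. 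Feeding this into Taubes's isomorphism $\mathrm{ECH}_*(Y,\lambda,\Gamma)\cong\widehat{HM}^{-*}(Y,\mathfrak s_\xi+\mathrm{PD}(\Gamma))$, and using that, since $b_1(Y)=0$, each Seiberg--Witten Floer group is a single $U$-tower $\mathcal T^{+}$ plus a finite-dimensional $U$-torsion module, I obtain the rigid picture: in each $\Gamma$ there is a unique orbit set of every sufficiently large grading, the $U$-map sends the grading-$n$ orbit set to the grading-$(n-2)$ one, and summing over the finitely many $\Gamma$ the number of orbit sets of grading $\le n$ is $\Theta(n)$.

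The second step brings in volume. Because $\partial=0$, the ECH spectral invariant of a class $\sigma$ in a $U$-tower equals the $d\lambda$-action of the unique orbit set in that grading, so the Cristofaro-Gardiner--Hutchings--Ramos asymptotics $c_\sigma^2/\sigma\to 2\,\mathrm{vol}(Y,\lambda)$ give that every orbit set $\beta$ of large grading satisfies $I(\beta)\sim \mathcal A(\beta)^2/\bigl(2\,\mathrm{vol}(Y,\lambda)\bigr)$. Combined with Step 1 this says: the number of orbit sets of action $\le R$ is $\Theta(R^2)$, and inside each fixed $\Gamma$ the orbit sets "fill up" the tower exactly (their count of grading $\le n$ is $\sim c\,n$ with $c$ determined by $\mathrm{vol}$ and $|H_1(Y)|$).

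The third step is the combinatorial contradiction, and this is where the hypotheses (an elliptic orbit $e$, and infinitely many simple orbits) enter. Since $e$ is elliptic, $\{(e,j)\}$ is an orbit set for all $j$, and its grading grows quadratically in $j$; more generally $k$ elliptic orbits generate orbit sets whose grading is asymptotically $\tfrac{1}{2\mathrm{vol}}\bigl(\sum_i m_i\mathcal A(e_i)\bigr)^2$, hence roughly $\Theta(n^{k/2})$ of them have grading $\le n$. For $k\ge 3$ this already violates the $\Theta(n)$ bound, so there are at most two elliptic orbits; since there are infinitely many orbits and (by assumption) no positive hyperbolic ones, there must be infinitely many negative hyperbolic orbits $\gamma_1,\gamma_2,\dots$. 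I would then pair powers of the elliptic orbit(s) with finite subsets $S$ of the $\gamma_i$: the orbit sets $\{(e,j)\}\cup\{\gamma_i:i\in S\}$ (and, when a second elliptic orbit $\delta$ is present, $\{(e,a),(\delta,b)\}\cup\{\gamma_i:i\in S\}$), after fixing $S$ and $j$ up to the finitely many homology constraints, land in one spin-c structure in numbers strictly exceeding the rigid $\Theta(n)$ count — a contradiction. In the two-elliptic case this is clean: two disjoint "linear-density" families of orbit sets cannot both sit inside one $\Gamma$.

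\textbf{Main obstacle.} The delicate point is the last part of Step 3 in the case of a \emph{single} elliptic orbit together with negative hyperbolic orbits whose actions grow very fast: then the naive count of orbit sets $\{(e,j)\}\cup\{\gamma_i:i\in S\}$ need not exceed $\Theta(n)$, and one must instead use the rigid "one orbit set per grading, $U$-linked" structure more forcefully — for example by showing the tower generators of $\mathfrak s_\xi$ are constrained to be essentially the iterates of $e$ (so the $\gamma_i$ cannot contribute at all), or by a sharp estimate showing a family $\{\sum_{i\in S}\mathcal A(\gamma_i)\}$ over subsets of an exponentially-spaced action set cannot be simultaneously dense enough to realize every large grading and sparse enough not to over-realize. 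Subsidiary technicalities I would have to attend to are: verifying the volume asymptotics in each spin-c structure (not just $\mathfrak s_\xi$), tracking the overall rational grading shift in the $\mathrm{ECH}\cong\widehat{HM}$ isomorphism so that the parity/density bookkeeping is legitimate, and disposing of degenerate cases in the index asymptotics (e.g. when the quadratic coefficient $\lim_j I(\{(e,j)\})/j^2$ could a priori vanish).
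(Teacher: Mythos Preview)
The statement you are asked to prove is Theorem~\ref{elliptic}, which the present paper does \emph{not} prove: it is quoted verbatim from the author's separate preprint~\cite{S} and used here only as a black box in the proofs of Theorem~\ref{z2act} and Corollary~\ref{allact}. There is therefore no ``paper's own proof'' against which to compare your proposal.

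That said, let me comment on the proposal itself. Your Steps~1 and~2 are sound and are exactly the standard opening: no positive hyperbolic orbits forces $\partial=0$ by Proposition~\ref{indexbasicprop}(3), hence $\mathrm{ECH}=\mathrm{ECC}$, and then Taubes's isomorphism together with the structure of monopole Floer homology over a rational homology sphere forces the generators in each $\Gamma$ to sit (up to a finite defect) one-per-grading along a $U$-tower; the Cristofaro-Gardiner--Hutchings--Ramos asymptotics then translate this into $I(\beta)\sim A(\beta)^{2}/\mathrm{vol}$.

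The genuine gap is the one you flag yourself. Your counting in Step~3 cleanly rules out three or more elliptic orbits, but in the residual case of a single elliptic orbit $e$ together with infinitely many negative hyperbolic orbits you do not have an argument. The heuristic ``two linear-density families cannot coexist in one $\Gamma$'' is not enough: a single elliptic tower $\{(e,j)\}$ already has density asymptotically matching the $U$-tower, so the extra orbit sets obtained by adjoining subsets of the $\gamma_i$ must be shown to \emph{over}fill some gradings, and your own remark about exponentially spaced actions shows why a crude density count does not close this. This is precisely where the real work lies in~\cite{S}, and your proposal in its current form does not supply it. The subsidiary issues you list (grading shifts, per-$\Gamma$ volume asymptotics) are genuine but routine; the single-elliptic overcount is the substantive missing piece.
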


By Theorem \ref{elliptic}, for answering Question \ref{quest}, we can see that it is enough to consider the next problem.

\begin{Que}
Let $Y$ be a closed connected three manifold with $b_{1}(Y)=0$. Does  $Y$  admit a non-degenerate contact form $\lambda$ such that all simple orbits are negative hyperbolic?
\end{Que}

The next theorems is the main theorem of this paper.

\begin{them}\label{maintheorem}
Let $L(p,q)$ $(p\neq \pm 1)$ be a lens space with odd $p$. Then $L(p,q)$ can not admit a non-degenerate contact form $\lambda$  all of whose simple periodic orbits are negative hyperbolic.
\end{them}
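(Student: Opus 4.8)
The plan is to lift $\lambda$ to the universal cover $S^{3}$ and obstruct the resulting contact form there by showing that its ECH differential must vanish, which turns out to be impossible.

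\emph{Reduction to $S^{3}$.} Suppose, for contradiction, that $L(p,q)$ with $p$ odd and $p\neq\pm1$ carries a non-degenerate contact form $\lambda$ all of whose simple Reeb orbits are negative hyperbolic. Write $L(p,q)=S^{3}/(\mathbb Z/p)$ for the standard free action, let $\pi\colon S^{3}\to L(p,q)$ be the covering, and put $\tilde\lambda=\pi^{*}\lambda$; this is a non-degenerate (indeed $\mathbb Z/p$-invariant) contact form on $S^{3}$ whose Reeb vector field is $\pi$-related to that of $\lambda$. Each simple Reeb orbit $\tilde\gamma$ of $\tilde\lambda$ projects to a simple Reeb orbit $\gamma$ of $\lambda$, and $\tilde\gamma\to\gamma$ is a $d$-fold covering, where $d\mid p$ is the order of the stabilizer of $\tilde\gamma$ in $\mathbb Z/p$; in particular $d$ is odd, and the linearized return map of $\tilde\gamma$ is the $d$-th power of that of $\gamma$. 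Since a negative hyperbolic return map has negative real eigenvalues $\mu,\mu^{-1}$ with $\mu\neq-1$, and $d$ is odd, the eigenvalues $\mu^{d},\mu^{-d}$ are again negative reals different from $1$; hence $\tilde\gamma$ is non-degenerate and negative hyperbolic. Thus $(S^{3},\tilde\lambda)$ is a non-degenerate contact form all of whose simple Reeb orbits are negative hyperbolic. This is the only place the hypothesis ``$p$ odd'' enters, and it is essential: were $d$ even, the lifted orbit would become positive hyperbolic.

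\emph{The ECH differential of $(S^{3},\tilde\lambda)$ vanishes.} Choose a generic compatible almost complex structure on $\mathbb R\times S^{3}$. Because every simple orbit is hyperbolic, no orbit can occur with multiplicity greater than one in an ECH generator, so the generators of $\mathrm{ECC}_{*}(S^{3},\tilde\lambda)$ are exactly the finite subsets $S$ of the set $\mathcal P$ of simple Reeb orbits, and the $\mathbb Z/2$-grading of $S$ is $\sum_{\gamma\in S}\mathrm{CZ}_{\tau}(\gamma)\equiv|S|\pmod 2$, since a negative hyperbolic orbit has odd Conley--Zehnder index. A nonzero matrix coefficient $\langle\partial\alpha,\beta\rangle$ would be computed by an $I=1$ holomorphic current whose non-trivial part $C_{0}$ is, by the standard structure theorem for low-index currents (see e.g.\ \cite{H2}), connected, embedded, and of Fredholm index $1$. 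But $\alpha$ and $\beta$ are subsets, so the total multiplicity of $C_{0}$ at each orbit is one; hence every end of $C_{0}$ is asymptotic with multiplicity one to a negative hyperbolic orbit, contributing an odd term to the index formula, and therefore
\[
\operatorname{ind}(C_{0})\;\equiv\;\chi(C_{0})+\sum_{\text{ends of }C_{0}}\mathrm{CZ}_{\tau}\;\equiv\;\chi(C_{0})+\#\{\text{ends of }C_{0}\}\;\equiv\;2-2g(C_{0})\;\equiv\;0\pmod 2,
\]
contradicting $\operatorname{ind}(C_{0})=1$. So no such current exists and $\partial\equiv0$.

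\emph{Conclusion, and the main obstacle.} By the isomorphism between ECH and Seiberg--Witten Floer cohomology (Theorem \ref{test}), $\mathrm{ECH}_{*}(S^{3},\tilde\lambda)$ is the Seiberg--Witten Floer cohomology of $S^{3}$, which is supported in even degrees; in particular $\mathrm{ECH}_{\mathrm{odd}}(S^{3},\tilde\lambda)=0$. Combined with $\partial\equiv0$ this gives $\mathrm{ECC}_{\mathrm{odd}}(S^{3},\tilde\lambda)=\mathrm{ECH}_{\mathrm{odd}}(S^{3},\tilde\lambda)=0$, i.e.\ there is no ECH generator of odd grading. But for every $\gamma\in\mathcal P$ the singleton $\{\gamma\}$ is a generator of odd grading, so $\mathcal P=\varnothing$ and $\tilde\lambda$ has no closed Reeb orbit, contradicting the three-dimensional Weinstein conjecture \cite{T1}. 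The delicate point is the vanishing of $\partial$: it rests on the structure theorem for $I=1$ currents together with the observation that, when all orbits are negative hyperbolic, every end of the relevant curves has multiplicity one and odd Conley--Zehnder index, which is exactly the mechanism that breaks in the presence of a positive hyperbolic orbit; via the reduction this is why the method requires $p$ odd. The remaining inputs---the covering-space computation of the return maps and the computation of $\mathrm{ECH}(S^{3})$---are standard.
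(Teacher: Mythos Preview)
Your reduction to $S^{3}$ and your argument that $\partial\equiv 0$ are both correct and in fact coincide with the first step in the paper's own argument (Lemma~\ref{meces3}). The gap is in your final paragraph: your claimed description of the $\mathbb Z/2$-grading of an ECH generator is wrong, and with the correct description the contradiction evaporates.

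You write that the $\mathbb Z/2$-grading of a generator $S$ equals $\sum_{\gamma\in S}\mathrm{CZ}_{\tau}(\gamma)\equiv |S|\pmod 2$. This is not the ECH grading: the ECH index is $I=c_{1}(\xi|_{Z},\tau)+Q_{\tau}(Z)+\sum\mathrm{CZ}$, and its parity is governed by Proposition~\ref{indexbasicprop}(3), namely $I(\alpha,\beta)\equiv\epsilon(\alpha)-\epsilon(\beta)\pmod 2$ where $\epsilon$ counts the \emph{positive} hyperbolic orbits. When every simple orbit is negative hyperbolic one has $\epsilon\equiv 0$ for all admissible orbit sets, so \emph{every} generator---in particular every singleton $\{\gamma\}$---lies in even degree. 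Thus $\mathrm{ECC}_{\mathrm{odd}}=0$ is automatic and perfectly compatible with $\mathrm{ECH}_{\mathrm{odd}}(S^{3})=0$; no contradiction with Weinstein follows. Intuitively, the $c_{1}+Q_{\tau}$ terms conspire to cancel the odd CZ contribution; this is exactly why detecting a positive hyperbolic orbit via parity considerations works only when $\epsilon$ can be nonzero.

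This is precisely the obstacle the paper has to work around. Having established $\partial=0$ (as you did), one knows the generators over $S^{3}$ are in bijection with the even nonnegative integers via $\tfrac12 I(\cdot)$ (Lemma~\ref{meces3}); the actual contradiction is obtained by comparing this $\mathbb Z$-grading with the decomposition of the generator set into $p$ congruence classes coming from the $L(p,q)$ cover (Lemmas~\ref{mecelens}--\ref{s3lift}), proving that the resulting map $H_{1}(L(p,q))\to\mathbb Z/p\mathbb Z$ is a group isomorphism (Lemma~\ref{isomorphismcyclic}), and then deriving a numerical incompatibility for $I(\tilde\gamma_{\min}\cup\tilde\gamma_{\mathrm{sec}})$ using the $U$-map and a compactness argument (Lemma~\ref{minimalandsecond}). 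Your argument recovers the set-up but not this finer analysis, which is where the content of the theorem lies.
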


Immediately, we have the next corollary.
\begin{cor}\label{periodic}
Let $(L(p,q),\lambda)$ $(p\neq \pm 1)$ be a lens space with a non-degenerate contact form $\lambda$. Suppose that $p$ is odd and there are infinity many simple Reeb orbits. Then, $(L(p,q),\lambda)$ has a simple positive hyperbolic orbit.
\end{cor}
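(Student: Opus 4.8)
The plan is to argue by contradiction, working throughout with embedded contact homology ($\mathrm{ECH}$) and Taubes' isomorphism with Seiberg--Witten Floer cohomology (Theorem~\ref{test}). Suppose $Y=L(p,q)$, with $p$ odd and $p\neq\pm1$, carries a nondegenerate contact form $\lambda$ all of whose simple Reeb orbits are negative hyperbolic. The first thing to check is that $\lambda$ has infinitely many simple Reeb orbits: it has no elliptic orbit, so in particular it does not have exactly two simple orbits that are both elliptic, while $c_{1}(\xi)$ is torsion because $H^{2}(Y;\mathbb{Z})\cong\mathbb{Z}/p$ is finite; so the structure theorem of \cite{HCP} for torsion $c_{1}(\xi)$ leaves only the alternative of infinitely many simple orbits. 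Hence the set $\mathcal{O}$ of simple orbits is infinite and, because hyperbolic orbits occur with multiplicity one in an orbit set, every generator of the chain complex $\mathrm{ECC}_{\ast}(Y,\lambda,\Gamma)$ is simply a finite subset of $\mathcal{O}$ of total homology class $\Gamma\in H_{1}(Y)\cong\mathbb{Z}/p$.

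Next I would record the precise shape of $\mathrm{ECH}(Y,\lambda)$. By Theorem~\ref{test}, $\mathrm{ECH}_{\ast}(Y,\lambda,\Gamma)\cong\widehat{HM}^{-\ast}(Y,\mathfrak{s}_{\xi}+\mathrm{PD}(\Gamma))$, and since $L(p,q)$ is an elliptic space form (an $L$-space) each of these $p$ groups is a single tower $\mathcal{T}^{+}$: it is $\mathbb{Z}$ in each of the degrees $d_{\Gamma},\,d_{\Gamma}+2,\,d_{\Gamma}+4,\dots$ and zero otherwise, and the $U$-map acts on it surjectively with one-dimensional kernel. The empty orbit set has cardinality $0$ and represents a nonzero class, so it lies in even degree; combined with the computation of $\widehat{HF}$ for $L$-spaces this forces every $d_{\Gamma}$ to be even, so that $\mathrm{ECH}(Y,\lambda)$ is concentrated in even degrees, with total rank exactly $p$ in each sufficiently large even degree.

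Third, I would pin down the parity of the ECH index of a generator. For $\alpha=\{\gamma_{1},\dots,\gamma_{n}\}$ with all $\gamma_{i}$ negative hyperbolic one has $I(\alpha)\equiv n+\phi([\alpha])\pmod 2$: the Conley--Zehnder index of a simply covered negative hyperbolic orbit is odd, the ECH index is additive up to even linking contributions, and the $c_{\tau}+Q_{\tau}$ terms contribute a correction depending only on the homology class (cf.\ Hutchings' computation of the mod~$2$ ECH index). The correction $\phi\colon H_{1}(Y)\to\mathbb{Z}/2$ is a homomorphism, by additivity of $I$ under disjoint unions of orbit sets, and this is where the hypothesis enters: since $p$ is odd, $\mathrm{Hom}(\mathbb{Z}/p,\mathbb{Z}/2)=0$ --- equivalently $Y$ is a $\mathbb{Z}/2$-homology sphere --- so $\phi\equiv0$ and $I(\alpha)\equiv\#\alpha\pmod 2$ uniformly over all $\Gamma$.

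Finally, the contradiction must come from the tension between an "infinitely large" chain complex and a "small" homology. Filtering by the symplectic action, $\mathrm{ECH}^{L}(Y,\lambda,\Gamma)=H_{\ast}(\mathrm{ECC}^{L}(Y,\lambda,\Gamma))$ maps to $\mathrm{ECH}(Y,\lambda,\Gamma)=\mathcal{T}^{+}$ and is an isomorphism in degrees far below $L$; at the same time, because $I(\alpha)\equiv\#\alpha$ and each new simple orbit $\gamma$ contributes to $\mathrm{ECC}^{L}$ (once $L$ exceeds $A(\gamma)$) a whole new family of generators $\{\gamma\}\cup\beta$ with cardinality shifted by one, the signed count of generators by cardinality is controlled by the formal product $\prod_{\gamma\in\mathcal{O}}(1-q^{A(\gamma)})$, whose finite truncations carry a nonzero "odd-cardinality surplus" in each homology class. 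Assembling these surpluses over the $p$ torsion $\mathrm{spin}^{c}$ structures (using $p$ odd a second time) and matching them against the Euler characteristics of the $p$ towers $\mathcal{T}^{+}$ together with the surjectivity of $U$ should be impossible. Making this last comparison rigorous --- ruling out a conspiracy in which the ECH differential cancels every superfluous generator, and providing the quantitative input that in \cite{S} is supplied by the iterates of an elliptic orbit --- is the step I expect to be the \emph{main obstacle}.
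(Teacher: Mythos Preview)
Your setup through step~3 is fine and is exactly how the paper begins, but the argument goes off track at step~4 and never recovers.

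\medskip

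\textbf{The parity computation is wrong.} Proposition~\ref{indexbasicprop}(3) says that for admissible orbit sets $I(\alpha,\beta,Z)\equiv\epsilon(\alpha)-\epsilon(\beta)\pmod 2$, where $\epsilon$ counts \emph{positive} hyperbolic orbits. Under your standing assumption every $\epsilon$ vanishes, so $I(\alpha)\equiv 0\pmod 2$ for \emph{every} admissible $\alpha$, not $I(\alpha)\equiv\#\alpha$. (Your heuristic ``CZ of a negative hyperbolic orbit is odd'' is correct, but the $c_\tau+Q_\tau$ term absorbs that parity; this is precisely the content of the cited proposition.) The immediate consequence is that $\partial=0$ on the whole chain complex, so $\mathrm{ECC}=\mathrm{ECH}$ and there is no ``conspiracy'' for the differential to arrange: every admissible orbit set \emph{is} a homology class, and in each $\Gamma$ there is exactly one admissible orbit set per nonnegative even index. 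Your proposed Euler-characteristic/generating-function comparison in step~5 therefore collapses---there is no surplus of generators to detect---and the ``main obstacle'' you flag is not merely hard but empty as stated.

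\medskip

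\textbf{What the paper actually does.} Having $\partial=0$ is the starting point, not the endgame. The paper then passes to the universal cover $\rho:(S^3,\tilde\lambda)\to(L(p,q),\lambda)$ and exploits two further facts you never invoke: (i) the ECH of $S^3$ is a \emph{single} tower, so the lifted admissible orbit sets are totally ordered by action with indices $0,2,4,\dots$ (Lemma~\ref{meces3}); and (ii) the ECH index scales by $p$ under the cover (Proposition~\ref{liftinglinear}). From (i) and (ii) one extracts a bijection $f:H_1(L(p,q))\to\mathbb{Z}/p$, $f(\Gamma)\equiv\tfrac12 I(\tilde\alpha^\Gamma)$, and a short linking-number computation (Lemma~\ref{isomorphismcyclic}) shows $f$ is a group \emph{homomorphism}. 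The contradiction then comes from a concrete analysis of the two smallest-action orbits $\gamma_{\min},\gamma_{\mathrm{sec}}$: one shows $\tfrac12 I(\tilde\gamma_{\min}\cup\tilde\gamma_{\mathrm{sec}})\equiv 3\pmod p$ by the homomorphism property, while a $U$-map/holomorphic-curve argument forces $3<\tfrac12 I(\tilde\gamma_{\min}\cup\tilde\gamma_{\mathrm{sec}})\le p$ (Lemma~\ref{minimalandsecond}); these are incompatible. None of this structure---the cover, the single tower on $S^3$, the homomorphism $f$, or the smallest-orbit analysis---appears in your outline, and I do not see how to reach a contradiction without it.
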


In addition to Theorem \ref{maintheorem}, the next Theorem \ref{z2act} and Corollary \ref{allact} hold. The proof of Theorem \ref{z2act} is shorter than the one of Theorem \ref{maintheorem} and Corollary \ref{allact} follows from Theorem \ref{maintheorem} and Theorem \ref{z2act} immediately.  We prove them at the end of this paper.

\begin{them}\label{z2act}
Let $(S^{3},\lambda)$ be a non-degenerate contact three sphere with a free $\mathbb{Z}/2\mathbb{Z}$ action. Suppose that $(S^{3},\lambda)$ has infinitely many simple periodic orbits. Then $(S^{3},\lambda)$ has a simple positive hyperbolic orbit.
\end{them}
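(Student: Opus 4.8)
The plan is to descend to the quotient and play off the extra topology there. Let $\iota\colon S^3\to S^3$ generate the free $\mathbb Z/2\mathbb Z$-action; since it preserves $\lambda$, the form descends to a contact form $\bar\lambda$ on the lens space $\bar Y=S^3/\iota\cong L(2,1)$, with double cover $\pi\colon S^3\to\bar Y$. First I would record the elementary dictionary between the Reeb dynamics of $\lambda$ and of $\bar\lambda$: a simple orbit $\bar\gamma$ of $\bar\lambda$ with $[\bar\gamma]=0\in H_1(\bar Y;\mathbb Z)\cong\mathbb Z/2\mathbb Z$ lifts to two disjoint simple orbits $\gamma,\iota\gamma$ of $\lambda$, each mapped diffeomorphically onto $\bar\gamma$ and with linearized return map conjugate to that of $\bar\gamma$; a simple orbit $\bar\gamma$ with $[\bar\gamma]\ne 0$ lifts to a single $\iota$-invariant simple orbit $\gamma$ that double-covers it, and chasing the chain rule along $\pi$ (the Reeb flows commute with $\pi$) shows that the return map $R_\gamma$ is conjugate in $\mathrm{Sp}(2,\mathbb R)$ to $R_{\bar\gamma}^{2}$ — only the conjugacy class matters, so trivializations play no role here. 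Along the way I would note that non-degeneracy of $\lambda$ forces non-degeneracy of $\bar\lambda$, since a degenerate orbit downstairs would force either its lift or the double cover of its lift to be degenerate upstairs.

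The crux is the following observation, call it the Key Lemma: \emph{no $\iota$-invariant simple orbit of $(S^3,\lambda)$ is negative hyperbolic}. Indeed such an orbit $\gamma$ has $R_\gamma\cong R_{\bar\gamma}^{2}$ with $R_{\bar\gamma}\in\mathrm{Sp}(2,\mathbb R)=\mathrm{SL}(2,\mathbb R)$. Because $\bar\lambda$ is non-degenerate, $\bar\gamma$ is positive hyperbolic, negative hyperbolic, or (irrational) elliptic; in the first two cases $R_{\bar\gamma}^{2}$ has two positive real eigenvalues, and in the third $R_{\bar\gamma}^{2}$ is again elliptic, so in no case is $R_\gamma$ a pair of distinct negative reals. (The only way a square in $\mathrm{SL}(2,\mathbb R)$ can have a negative real eigenvalue at all is $R_{\bar\gamma}^{2}=-\mathrm{Id}$, and that would make $\bar\gamma^{4}$ a degenerate orbit, which is excluded.) Next I would feed in the one piece of Floer theory: since $\bar Y\cong L(2,1)$ is a rational homology sphere, $\widehat{HM}(\bar Y,\mathfrak s)\ne 0$ for each of its two $\mathrm{Spin}^{c}$ structures (equivalently, $\bar Y$ is an $L$-space with $\widehat{HF}(\bar Y,\mathfrak s)\cong\mathbb Z$), and under Taubes' isomorphism $\mathrm{ECH}_{*}(\bar Y,\bar\lambda,\Gamma)\cong\widehat{HM}^{-*}(\bar Y,\mathfrak s_{\bar\xi}+\mathrm{PD}(\Gamma))$ the classes $\mathfrak s_{\bar\xi}+\mathrm{PD}(\Gamma)$, as $\Gamma$ ranges over $H_1(\bar Y)$, exhaust both $\mathrm{Spin}^{c}$ structures. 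Hence for the nontrivial $\Gamma_0\in H_1(\bar Y)$ we get $\mathrm{ECH}_{*}(\bar Y,\bar\lambda,\Gamma_0)\ne 0$, so $\bar\lambda$ has an orbit set of total homology class $\Gamma_0\ne 0$, and therefore a simple orbit $\bar\alpha_0$ with $[\bar\alpha_0]\ne 0$.

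To finish, suppose for contradiction that $(S^3,\lambda)$ has no simple positive hyperbolic orbit. If it also has no elliptic orbit, then (by the trichotomy, which is exhaustive for non-degenerate forms) every simple orbit of $\lambda$ is negative hyperbolic; but the homologically nontrivial orbit $\bar\alpha_0$ lifts to an $\iota$-invariant simple orbit of $\lambda$, which the Key Lemma forbids from being negative hyperbolic — a contradiction. So $\lambda$ has at least one elliptic orbit, and since $b_1(S^3)=0$ and $(S^3,\lambda)$ has infinitely many simple orbits, Theorem~\ref{elliptic} produces a simple positive hyperbolic orbit, again a contradiction. Hence $(S^3,\lambda)$ has a simple positive hyperbolic orbit. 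I expect the load-bearing point to be exactly this tension — the monopole/ECH computation on $L(2,1)$ \emph{forces} a homologically essential Reeb orbit downstairs, while the double-cover relation $R_\gamma\cong R_{\bar\gamma}^{2}$ \emph{forbids} its lift from being negative hyperbolic — rather than any single hard estimate; the only routine care needed is in checking that $\bar\lambda$ inherits non-degeneracy and that the orbit trichotomy is exhaustive, so that ``neither positive nor negative hyperbolic'' really does mean ``elliptic''.
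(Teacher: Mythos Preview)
Your argument is correct and shares its decisive observation with the paper: an $\iota$-invariant simple orbit double-covers a simple orbit downstairs, so its linearized return map is a square in $\mathrm{SL}(2,\mathbb{R})$ and therefore cannot be negative hyperbolic. The one genuine difference is in how you locate an $\iota$-invariant orbit. The paper works upstairs: once all simple orbits are assumed negative hyperbolic, $\partial=0$ and the known structure $\mathrm{ECH}(S^{3},\lambda,0)\cong\mathbb{F}[U^{-1},U]/U\mathbb{F}[U]$ forces the ECH generators (i.e.\ the admissible orbit sets) to be in bijection with their indices; a non-invariant orbit together with its $\iota$-image would then give two distinct admissible orbit sets of equal index, a contradiction --- so \emph{every} simple orbit is $\iota$-invariant. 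You instead work downstairs: nonvanishing of $\mathrm{ECH}(L(2,1),\bar\lambda,\Gamma_{0})$ for the nontrivial class $\Gamma_{0}$ produces a single homologically essential simple orbit, whose lift is the invariant orbit you need. Your route is a little more economical (you only need nonvanishing, not the full $U$-module structure, and you never invoke Lemma~\ref{meces3}), at the cost of having to verify that $\bar\lambda$ inherits non-degeneracy so that ECH is defined on the quotient; the paper's route avoids that check and yields the stronger intermediate statement that all orbits are invariant. One notational quibble: Taubes' isomorphism (Theorem~\ref{test}) is with $\check{HM}_{*}$, not $\widehat{HM}$; the conclusion $\mathrm{ECH}(L(2,1),\bar\lambda,\Gamma_{0})\neq 0$ that you actually use is exactly Proposition~\ref{isomorhpisms}.
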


\begin{cor}\label{allact}
Let $(S^{3},\lambda)$ be a non-degenerate contact three sphere with a nontrivial  finite free group action. Suppose that $(S^{3},\lambda)$ has  infinitely many simple periodic orbits. Then $(S^{3},\lambda)$ has a simple positive hyperbolic orbit.
\end{cor}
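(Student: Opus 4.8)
The plan is to reduce Corollary \ref{allact} to the two cases already established: Theorem \ref{z2act} whenever a free $\mathbb{Z}/2\mathbb{Z}$-action is present, and Corollary \ref{periodic} (equivalently, Theorem \ref{maintheorem} combined with Theorem \ref{elliptic}) for lens spaces with odd $p$. So let $G$ be the given nontrivial finite group acting freely on $(S^3,\lambda)$, the action being understood to be by strict contactomorphisms so that $\lambda$ is $G$-invariant. The first step is to invoke Cauchy's theorem: $G$ has an element of prime order $p$, hence a subgroup $H\cong\mathbb{Z}/p\mathbb{Z}$ whose action on $S^3$ is again free, and if $2$ divides $|G|$ we may take $p=2$ while otherwise $p$ is automatically odd. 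If $p=2$ we are done at once by Theorem \ref{z2act}, so the remaining case is $p$ odd.

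Assume $p$ is odd. The second step is to pass to the quotient $\pi\colon S^3\to S^3/H$, a $p$-fold covering onto a lens space $L(p,q)$ with $p\neq\pm 1$ (a free cyclic action on $S^3$ yields a spherical space form with cyclic fundamental group, i.e.\ a lens space); since $\lambda$ is $H$-invariant it descends to a contact form $\bar\lambda$ on $L(p,q)$ with Reeb vector field $\pi_*X_\lambda$. I would then verify that $\bar\lambda$ inherits the needed hypotheses: for a simple Reeb orbit $\gamma$ of $\bar\lambda$ with return map $A$, the set $\pi^{-1}(\mathrm{im}\,\gamma)$ is a disjoint union of simple Reeb orbits of $\lambda$, each covering $\gamma$ either $1$-to-$1$ (return map $A$) or $p$-to-$1$ (return map $A^p$) --- this is where primality of $p$ enters --- so non-degeneracy of $\lambda$ forces non-degeneracy of $\bar\lambda$, because $A^p$ has eigenvalue $1$ only if $A$ does; similarly the induced map from simple orbits of $\lambda$ to simple orbits of $\bar\lambda$ is at most $p$-to-$1$, so the existence of infinitely many simple orbits upstairs yields infinitely many downstairs.

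The third step is to apply Corollary \ref{periodic} to $(L(p,q),\bar\lambda)$, whose hypotheses are now met, obtaining a simple positive hyperbolic orbit $\gamma_0$ of $\bar\lambda$ with return map conjugate to $\mathrm{diag}(h,h^{-1})$ for some $h>0$, $h\neq1$. Finally I would lift: any simple Reeb orbit $\tilde\gamma_0$ of $\lambda$ lying in $\pi^{-1}(\mathrm{im}\,\gamma_0)$ has return map conjugate to $\mathrm{diag}(h^k,h^{-k})$ with $k\in\{1,p\}$, and since $h>0$ this still has positive real eigenvalues with neither equal to $1$; hence $\tilde\gamma_0$ is the desired simple positive hyperbolic Reeb orbit of $(S^3,\lambda)$.

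The substantive content is carried entirely by Theorem \ref{z2act}, Theorem \ref{maintheorem} and Theorem \ref{elliptic}, so what is left is bookkeeping, and the point needing most care is that non-degeneracy and the trichotomy elliptic / positive hyperbolic / negative hyperbolic transfer correctly under the finite covering $\pi$. This uses that $p$ is prime (so an orbit upstairs covers its image exactly $1$- or $p$-to-$1$) and, for preserving positive hyperbolicity under lifting, the elementary fact that a power of a matrix with positive real eigenvalues again has positive real eigenvalues; one should also record the convention that the finite free action is by strict contactomorphisms, so that $\lambda$ genuinely descends.
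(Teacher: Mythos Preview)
Your proof is correct and follows essentially the same route as the paper: pass to a nontrivial cyclic subgroup, invoke Theorem \ref{z2act} when the order is even, and otherwise descend to a lens space with odd $p$ and apply Corollary \ref{periodic}, then lift the positive hyperbolic orbit back. You supply considerably more detail than the paper (non-degeneracy and infinitude of orbits under the quotient, behavior of the return map under lifting), all of which is fine; the only spot to tighten is that non-degeneracy downstairs must hold for all iterates $\gamma^m$, not just the simple orbit, but this follows by the same eigenvalue argument applied to $A^m$.
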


\subsection*{Acknowledgement}
The author would like to thank his advisor Professor Kaoru Ono
for his discussion, and Suguru Ishikawa for a series of discussion. The author also would like to  thank Michael Hutching for some comments.
This work was supported by JSPS KAKENHI Grant Number JP21J20300.

\section{Preliminaries}
 
For a non-degenerate contact three manifold $(Y,\lambda)$ and $\Gamma \in H_{1}(Y;\mathbb{Z})$, Embedded contact homology $\mathrm{ECH}(Y,\lambda,\Gamma)$ is defined. At first, we define the chain complex $(\mathrm{ECC}(Y,\lambda,\Gamma),\partial)$. In this paper, we consider ECH over $\mathbb{Z}/2\mathbb{Z}=\mathbb{F}$.
\begin{dfn} [{\cite[Definition 1.1]{H1}}]\label{qdef}
An orbit set $\alpha=\{(\alpha_{i},m_{i})\}$ is a finite pair of distinct simple periodic orbit $\alpha_{i}$ with positive integer $m_{i}$.
If $m_{i}=1$ whenever $\alpha_{i}$ is hyperboric orbit, then $\alpha=\{(\alpha_{i},m_{i})\}$ is called an admissible orbit set.
\end{dfn}
Set $[\alpha]=\sum m_{i}[\alpha_{i}] \in H_{1}(Y)$. For two orbit set $\alpha=\{(\alpha_{i},m_{i})\}$ and $\beta=\{(\beta_{j},n_{j})\}$ with $[\alpha]=[\beta]$, we define  $H_{2}(Y,\alpha,\beta)$ to be the set of relative homology classes of
2-chains $Z$ in $Y$ with $\partial Z =\sum_{i}m_{i} \alpha_{i}-\sum_{j}m_{j}\beta_{j}$. This is an affine space over $H_{2}(Y)$.
\begin{dfn}[{\cite[Definition2.2]{H1}}]\label{representative}
Let $Z \in H_{2}(Y;\alpha,\beta)$. A representative of $Z$ is an immersed oriented compact surface $S$ in $[0,1]\times Y$ such that:
\item[1.] $\partial S$ consists of positively oriented (resp. negatively oriented) covers of $\{1\}\times \alpha_{i}$ (resp. $\{0\}\times \beta_{j}$) whose total multiplicity is $m_{i}$ (resp. $n_{j}$).
\item[2.] $[\pi (S)]=Z$, where $\pi:[0,1]\times Y \to Y$ denotes the projection.
\item[3.] $S$ is embedded in $(0,1)\times Y$, and $S$ is transverse to $\{0,1\}\times Y$.
\end{dfn}

From now on, we fix a trivialization of $\xi$ defined over every simple orbit $\gamma$ and write it by $\tau$.

For a non-degenerate Reeb orbit $\gamma$, $\mu_{\tau}(\gamma)$ denotes its Conley-Zehnder index with respect to a trivialization $\tau$ in this paper. If $\gamma$ is hyperbolic (that is, not elliptic), then $\mu_{\tau}(\gamma^{p})=p\mu_{\tau}(\gamma)$ for all positive integer $p$ where $\gamma^{p}$ denotes the $p$ times covering orbit of $\gamma$ (for example, see \cite[Proposition 2.1]{H1}).

\begin{dfn}[{\cite[{\S}8.2]{H1}}]\label{intersection}
Let $\alpha_{1}$, $\beta_{1}$, $\alpha_{2}$ and $\beta_{2}$ be orbit sets with $[\alpha_{1}]=[\beta_{1}]$ and $[\alpha_{2}]=[\beta_{2}]$. For a fixed trivialization $\tau$, we can define
\begin{equation}
    Q_{\tau}:H_{2}(Y;\alpha_{1},\beta_{1}) \times H_{2}(Y;\alpha_{2},\beta_{2}) \to \mathbb{Z}.
\end{equation}
by $Q_{\tau}(Z_{1},Z_{2})=-l_{\tau}(S_{1},S_{2})+\#(S_{1}\cap{S_{2}})$ where  $S_{1}$, $S_{2}$ are  representatives of $Z_{1}$, $Z_{2}$  for $Z_{1}\in H_{2}(Y;\alpha_{1},\beta_{1})$, $Z_{2} \in H_{2}(Y;\alpha_{2},\beta_{2})$ respectively, $\#(S_{1}\cap{S_{2}})$ is their algebraic intersection number and $l_{\tau}$ is a kind of crossing number (see {\cite[{\S}8.3]{H1}} for details).
\end{dfn}

\begin{dfn}[{\cite[Definition 1.5]{H1}}]
For $Z\in H_{2}(Y,\alpha,\beta)$, we define ECH index by
\begin{equation}
    I(\alpha,\beta,Z):=c_{1}(\xi|_{Z},\tau)+Q_{\tau}(Z)+\sum_{i}\sum_{k=1}^{m_{i}}\mu_{\tau}(\alpha_{i}^{k})-\sum_{j}\sum_{k=1}^{n_{j}}\mu_{\tau}(\beta_{j}^{k}).
\end{equation}
Here,   $c_{1}(\xi|_{Z},\tau)$ is a relative Chern number  and, $Q_{\tau}(Z)=Q_{\tau}(Z,Z)$. Moreover this is independent of $\tau$ (see  \cite{H1} for more details).
\end{dfn}
\begin{prp}[{\cite[Proposition 1.6]{H1}}]\label{indexbasicprop}
 The ECH index $I$ has the following properties.
  \item[1.] For orbit sets $\alpha, \beta, \gamma$ with $[\alpha]=[\beta]=[\gamma]=\Gamma\in H_{1}(Y)$ and $Z\in H_{2}(Y,\alpha,\beta)$, $Z'\in H_{2}(Y,\beta,\gamma)$,
  \begin{equation}\label{adtiv}
  I(\alpha,\beta,Z)+I(\beta,\gamma,Z')=I(\alpha,\gamma,Z+Z').
  \end{equation}
  \item[2.] For $Z, Z'\in H_{2}(Y,\alpha,\beta)$,
  \begin{equation}\label{homimi}
      I(\alpha,\beta,Z)-I(\alpha,\beta,Z')=<c_{1}(\xi)+2\mathrm{PD}(\Gamma),Z-Z'>.
  \end{equation}
  \item[3.] If $\alpha$ and $\beta$ are admissible orbit sets,
  \begin{equation}\label{mod2}
      I(\alpha,\beta,Z)=\epsilon(\alpha)-\epsilon(\beta) \,\,\,\mathrm{mod}\,\,2.
  \end{equation}
  Here, $\epsilon(\alpha)$, $\epsilon(\beta)$ are the numbers of positive hyperbolic orbits in $\alpha$, $\beta$ respectively.

\end{prp}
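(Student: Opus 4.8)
The plan is to prove each of the three properties by tracking separately the three ingredients that make up the ECH index: the relative first Chern number $c_1(\xi|_Z,\tau)$, the relative self-intersection $Q_\tau(Z)=Q_\tau(Z,Z)$, and the Conley--Zehnder term $\mathrm{CZ}_\tau(\alpha):=\sum_i\sum_{k=1}^{m_i}\mu_\tau(\alpha_i^k)$. I will use the following structural features, recalled from the definitions above: $c_1(\xi|_Z,\tau)$ is the signed count of zeros of a generic section of $\xi$ over a representative surface $S$ of $Z$ that restricts to the nonzero section prescribed by $\tau$ along $\partial S$; the pairing $Q_\tau(\cdot,\cdot)$ is symmetric and is computed from interior intersection numbers of representatives corrected by the crossing term $l_\tau$; and $\mathrm{CZ}_\tau$ depends only on the orbit set, not on $Z$.

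For the additivity \eqref{adtiv}, I would first observe that the Conley--Zehnder terms telescope: in $I(\alpha,\beta,Z)+I(\beta,\gamma,Z')$ the contribution $+\mathrm{CZ}_\tau(\beta)$ of the first summand cancels the $-\mathrm{CZ}_\tau(\beta)$ of the second, leaving $\mathrm{CZ}_\tau(\alpha)-\mathrm{CZ}_\tau(\gamma)$. Next, choosing representatives $S$ for $Z$ supported in $[\tfrac12,1]\times Y$ and $S'$ for $Z'$ supported in $[0,\tfrac12]\times Y$, meeting only along $\{\tfrac12\}\times\beta$, their union represents $Z+Z'$; the relative Chern number is additive because a section realizing $\tau$ along $\beta$ can be glued, so the zero counts add. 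The one genuine point is $Q_\tau$: expanding $Q_\tau(Z+Z')=Q_\tau(Z)+2Q_\tau(Z,Z')+Q_\tau(Z')$, I must show the cross term $Q_\tau(Z,Z')$ vanishes. With the disjoint supports above the only interior intersections and crossings occur near $\{\tfrac12\}\times\beta$, and the definition of $l_\tau$ is arranged precisely so that these cancel; hence $Q_\tau$ is additive under composition of relative classes and \eqref{adtiv} follows.

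For the change-of-class formula \eqref{homimi}, write $A:=Z-Z'\in H_2(Y)$ and $\Gamma:=[\alpha]=[\beta]$. The term $\mathrm{CZ}_\tau$ is unchanged since it ignores $Z$. For the Chern number, the same section over two surfaces with identical boundary differs in its zero count by its evaluation on the closed surface $A$, giving $c_1(\xi|_Z,\tau)-c_1(\xi|_{Z'},\tau)=\langle c_1(\xi),A\rangle$. For the intersection term I would invoke the additivity just proved to write $Q_\tau(Z)=Q_\tau(Z'+A)=Q_\tau(Z')+2Q_\tau(Z',A)+Q_\tau(A)$; a closed surface representing $A\in H_2(Y)$ can be displaced in the $[0,1]$-direction, so $Q_\tau(A,A)=0$, while $Q_\tau(Z',A)$ equals the intersection number of a representative of $Z'$ with the $2$-cycle $A$, namely $\langle\mathrm{PD}(\Gamma),A\rangle$. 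Summing the three contributions gives $I(\alpha,\beta,Z)-I(\alpha,\beta,Z')=\langle c_1(\xi)+2\mathrm{PD}(\Gamma),A\rangle$, which is \eqref{homimi}.

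Finally, for \eqref{mod2} I would first note that $Y$ is an orientable $3$-manifold, hence parallelizable, so $c_1(\xi)\equiv w_2(TY)=0\pmod 2$; combined with \eqref{homimi} this shows that $I(\alpha,\beta,Z)\bmod 2$ is independent of $Z$, so the statement becomes a pure parity computation. The Conley--Zehnder indices satisfy $\mu_\tau(\gamma^k)$ odd for elliptic $\gamma$, $\mu_\tau(\gamma^k)$ even for positive hyperbolic $\gamma$, and $\mu_\tau(\gamma^k)\equiv k$ for negative hyperbolic $\gamma$; since admissible orbit sets carry hyperbolic orbits with multiplicity one, this yields $\mathrm{CZ}_\tau(\alpha)\equiv \#\{\,\alpha_i \text{ negative hyperbolic}\,\}+\sum_{\alpha_i\ \mathrm{elliptic}} m_i \pmod 2$. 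The main obstacle, and the real content of \eqref{mod2}, is therefore the complementary parity identity $c_1(\xi|_Z,\tau)+Q_\tau(Z)\equiv \sum_i m_i-\sum_j n_j\pmod 2$ for admissible $\alpha,\beta$: granting it, the elliptic and negative-hyperbolic contributions cancel against those of $c_1+Q_\tau$ and only the positive hyperbolic orbits survive, giving $I\equiv\epsilon(\alpha)-\epsilon(\beta)$. I expect to establish this identity from the relative adjunction formula together with the writhe and linking behaviour of the braids formed by the ends of a representative near each orbit, reducing the parity of $Q_\tau(Z)$ to self-linking numbers that pair off against $c_1(\xi|_Z,\tau)$ modulo $2$.
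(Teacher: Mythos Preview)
The paper does not supply its own proof of this proposition: it is quoted verbatim from \cite[Proposition~1.6]{H1} as a structural input and no argument is given. There is therefore nothing in this paper to compare your proposal against; the ``paper's proof'' is simply the citation.

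As a standalone sketch, your treatment of parts~1 and~2 follows the standard route in \cite{H1} and is correct in outline, though your justification of the additivity of $Q_\tau$ via ``the cross term $Q_\tau(Z,Z')$ vanishes'' is a little glib: in \cite{H1} the composition additivity $Q_\tau(Z+Z')=Q_\tau(Z)+Q_\tau(Z')$ is established directly from the definition via stacked $\tau$-representatives, not by expanding a bilinear form and arguing the mixed term is zero (indeed, the mixed pairing for classes with \emph{different} boundary data requires care to even interpret). For part~3 you correctly reduce \eqref{mod2} to the parity identity
\[
c_1(\xi|_Z,\tau)+Q_\tau(Z)\ \equiv\ \sum_i m_i-\sum_j n_j \pmod 2,
\]
but you do not prove it; saying you ``expect to establish'' it from adjunction and writhe is precisely the gap. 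That identity is the actual content of \eqref{mod2}, and in \cite{H1} it is obtained by a direct computation with suitably chosen representatives and trivializations rather than by invoking the full relative adjunction formula. If you want a complete proof you should carry this step out.
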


For $\Gamma \in H_{1}(Y)$, we define $\mathrm{ECC}(Y,\lambda,\Gamma)$ as freely generated module over $\mathbb{Z}/2$ by admissible orbit sets $\alpha$ such that $[\alpha]=\Gamma$. That is,
\begin{equation}
    \mathrm{ECC}(Y,\lambda,\Gamma):= \bigoplus_{\alpha:\mathrm{admissibe\,\,orbit\,\,set\,\,with\,\,}{[\alpha]=\Gamma}}\mathbb{Z}_{2}\langle \alpha \rangle.
\end{equation}

To define the differential $\partial:\mathrm{ECC}(Y,\lambda,\Gamma)\to \mathrm{ECC}(Y,\lambda,\Gamma) $, we pick a generic $\mathbb{R}$-invariant almost complex structure $J$  on $\mathbb{R}\times Y$ which satisfies $J(\frac{d}{ds})=X_{\lambda}$ and $J\xi=\xi$.

We consider $J$-holomorphic curves  $u:(\Sigma,j)\to (\mathbb{R}\times Y,J)$ where
the domain $(\Sigma, j)$ is a punctured compact Riemann surface. Here the domain $\Sigma$ is
not necessarily connected.  Let $\gamma$ be a (not necessarily simple) Reeb orbit.  If a puncture
of $u$ is asymptotic to $\mathbb{R}\times \gamma$ as $s\to \infty$, we call it a positive end of $u$ at $\gamma$ and if a puncture of $u$ is asymptotic to $\mathbb{R}\times \gamma$ as $s\to -\infty$, we call it a negative end of $u$ at $\gamma$ (For more details \cite{H1}).

Let $u:(\Sigma,j)\to (\mathbb{R}\times Y,J)$ and $u':(\Sigma',j')\to (\mathbb{R}\times Y,J)$ be two $J$-holomorphic curves. If there is a biholomorphic map $\phi:(\Sigma,j)\to (\Sigma',j')$ with $u'\circ \phi= u$, we regard $u$ and $u'$ as equivalent.

 Let $\alpha=\{(\alpha_{i},m_{i})\}$ and $\beta=\{(\beta_{i},n_{i})\}$ be orbit sets. Let $\mathcal{M}^{J}(\alpha,\beta)$ denote the set of  $J$-holomorphic curves with positive ends
at covers of $\alpha_{i}$ with total covering multiplicity $m_{i}$, negative ends at covers of $\beta_{j}$
with total covering multiplicity $n_{j}$, and no other punctures. Moreover, in $\mathcal{M}^{J}(\alpha,\beta)$, we consider two
$J$-holomorphic curves   to be equivalent if they represent the same current in $\mathbb{R}\times Y$.
For $u \in \mathcal{M}^{J}(\alpha,\beta)$, we naturally have $[u]\in H_{2}(Y;\alpha,\beta)$ and we set $I(u)=I(\alpha,\beta,[u])$. Moreover we define
\begin{equation}\label{just}
     \mathcal{M}_{k}^{J}(\alpha,\beta):=\{\,u\in  \mathcal{M}^{J}(\alpha,\beta)\,|\,I(u)=k\,\,\}
\end{equation}

In this notations, we can define $\partial_{J}:\mathrm{ECC}(Y,\lambda,\Gamma)\to \mathrm{ECC}(Y,\lambda,\Gamma)$ as follows.

For admissible orbit set $\alpha$ with $[\alpha]=\Gamma$, we define

\begin{equation}
    \partial_{J} \langle \alpha \rangle=\sum_{\beta:\mathrm{admissible\,\,orbit\,\,set\,\,with\,\,}[\beta]=\Gamma} \# (\mathcal{M}_{1}^{J}(\alpha,\beta)/\mathbb{R})\cdot \langle \beta \rangle.
\end{equation}

Note that  the above counting $\#$ is well-defined since $\mathcal{M}_{1}^{J}(\alpha,\beta)/\mathbb{R}$ is a set of finite points (this follows from Proposition \ref{ind} and the compactness of the moduli space (\cite{H1})). Moreover $\partial_{J} \circ \partial_{J}=0$ (see \cite{HT1}, \cite{HT2}) and the homology defined by $\partial_{J}$ does not depend on $J$ (see Theorem \ref{test}, or see \cite{T1}).

For $u\in \mathcal{M}^{J}(\alpha,\beta)$, the  its (Fredholm) index is defined by

\begin{equation}
    \mathrm{ind}(u):=-\chi(u)+2c_{1}(\xi|_{[u]},\tau)+\sum_{k}\mu_{\tau}(\gamma_{k}^{+})-\sum_{l}\mu_{\tau}(\gamma_{l}^{-}).
\end{equation}
Here $\{\gamma_{k}^{+}\}$ is the set consisting of (not necessarilly simple) all positive ends of $u$ and $\{\gamma_{l}^{-}\}$ is that one of all negative ends.  Note that for generic $J$, if $u$ is connected and somewhere injective, then the moduli space of $J$-holomorphic
curves near $u$ is a manifold of dimension $\mathrm{ind}(u)$ (see \cite[Definition 1.3]{HT1}).

 Let $\alpha=\{(\alpha_{i},m_{i})\}$ and $\beta=\{(\beta_{i},n_{i})\}$. For  $u\in \mathcal{M}^{J}(\alpha,\beta)$, it can be uniquely
written as $u=u_{0}\cup{u_{1}}$ where $u_{0}$ are unions of all components which maps to $\mathbb{R}$-invariant cylinders in $u$ and $u_{1}$ is the rest of $u$.

\begin{prp}[{\cite[Proposition 7.15]{HT1}}]\label{ind}
Suppose that $J$ is generic and $u=u_{0}\cup{u_{1}}\in \mathcal{M}^{J}(\alpha,\beta)$. Then

    \item[1.] $I(u)\geq 0$
    \item[2.] If $I(u)=0$, then $u_{1}=\emptyset$
    \item[3.] If $I(u)=1$, then $u_{1}$ is embedded and $u_{0}\cap{u_{1}}=\emptyset$. Moreover $\mathrm{ind}(u_{1})=1$.
    \item[4.] If $I(u)=2$ and $\alpha$ and $\beta$ are admissible, then $u_{1}$ is embedded and $u_{0}\cap{u_{1}}=\emptyset$. Moreover $\mathrm{ind}(u_{1})=2$.

\end{prp}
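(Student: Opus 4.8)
The plan is to follow Hutchings and Taubes and deduce all four assertions from a single \emph{ECH index inequality} comparing $I(u)$ with the Fredholm index $\mathrm{ind}(u)$, built from the relative adjunction formula and the asymptotic writhe (partition) bounds. \textbf{Step 1 (relative adjunction).} For each somewhere injective component $C$ of $u$ I would invoke the relative adjunction formula
\begin{equation*}
c_{1}(\xi|_{[C]},\tau)=\chi(C)+Q_{\tau}([C])+w_{\tau}(C)-2\delta(C),
\end{equation*}
where $w_{\tau}(C)$ is the total writhe of the braids traced by the ends of $C$ around the underlying Reeb orbits (measured via $\tau$) and $\delta(C)\ge 0$ counts the singularities of $C$, vanishing exactly when $C$ is embedded.

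\textbf{Step 2 (writhe bounds and the index inequality).} Substituting this into the definitions of $I$ and $\mathrm{ind}$, a direct computation gives, for a somewhere injective $C\in\mathcal{M}^{J}(\alpha,\beta)$,
\begin{equation*}
I(C)-\mathrm{ind}(C)=2\delta(C)-w_{\tau}(C)+\Big(\sum_{i}\sum_{k=1}^{m_{i}}\mu_{\tau}(\alpha_{i}^{k})-\sum_{j}\sum_{k=1}^{n_{j}}\mu_{\tau}(\beta_{j}^{k})-\sum_{k}\mu_{\tau}(\gamma_{k}^{+})+\sum_{l}\mu_{\tau}(\gamma_{l}^{-})\Big).
\end{equation*}
The asymptotic analysis of Hofer--Wysocki--Zehnder bounds the writhe at each positive end from above and at each negative end from below in terms of the Conley--Zehnder indices of the covers of the corresponding orbit; these bounds show that the parenthesized quantity minus $w_{\tau}(C)$ is nonnegative, with equality precisely when the multiplicities of the ends of $C$ obey the ECH partition conditions. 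Hence $I(C)\ge \mathrm{ind}(C)+2\delta(C)$.

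\textbf{Step 3 (the full current and low-index rigidity).} Writing $u=u_{0}\cup u_{1}$ and expanding $Q_{\tau}(u)=Q_{\tau}(u_{0})+2Q_{\tau}(u_{0},u_{1})+Q_{\tau}(u_{1})$, the $\mathbb{R}$-invariant part $u_{0}$ contributes nothing to $\mathrm{ind}$, while its linking and intersection with $u_{1}$ enter $I$ nonnegatively because distinct holomorphic curves meet nonnegatively. For generic $J$, $u_{1}$ is a union of somewhere injective curves to which Step 2 applies, so
\begin{equation*}
I(u)\ge \mathrm{ind}(u_{1})+2\delta(u_{1})+2\,\#(u_{0}\cap u_{1})\ge 0,
\end{equation*}
which is assertion 1. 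If $u_{1}\ne\emptyset$ it carries a free $\mathbb{R}$-action by translation, so its generic moduli space has positive dimension and $\mathrm{ind}(u_{1})\ge 1$; genericity likewise forces any multiply covered non-cylinder component to raise $I$, so no such component occurs below. Thus $I(u)=0$ forces $u_{1}=\emptyset$ (assertion 2); $I(u)=1$ forces $\delta(u_{1})=0$, $\#(u_{0}\cap u_{1})=0$ and $\mathrm{ind}(u_{1})=1$ (assertion 3); and for $I(u)=2$ with $\alpha,\beta$ admissible the mod-$2$ relation (\ref{mod2}) constrains the parities so as again to pin down $\delta(u_{1})=0$, $\#(u_{0}\cap u_{1})=0$ and $\mathrm{ind}(u_{1})=2$ (assertion 4).

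The main obstacle is Step 2: establishing the sharp writhe bounds and carrying out the partition bookkeeping so that \emph{every} discrepancy term between $I$ and $\mathrm{ind}$ is manifestly nonnegative. This is the delicate analytic core, and it is precisely what makes the equality cases rigid enough to force embeddedness and disjointness in the low-index regime.
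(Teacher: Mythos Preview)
The paper does not prove this proposition; it is quoted from \cite[Proposition~7.15]{HT1} and used as a black box. Your sketch is exactly the argument given there---the ECH index inequality $I(u)\ge \mathrm{ind}(u_{1})+2\delta(u_{1})+2\,\#(u_{0}\cap u_{1})$ obtained from the relative adjunction formula together with the asymptotic writhe/partition bounds---so there is nothing further to compare.
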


If $c_{1}(\xi)+2\mathrm{PD}(\Gamma)$ is torsion, there exists the relative $\mathbb{Z}$-grading.
 \begin{equation}
    \mathrm{ECH}(Y,\lambda,\Gamma):= \bigoplus_{*:\,\,\mathbb{Z}\mathrm{-grading}}\mathrm{ECH}_{*}(Y,\lambda,\Gamma).
\end{equation}

Let $Y$ be connected.
Then there is degree$-2$ map $U$.
\begin{equation}\label{Umap}
    U:\mathrm{ECH}_{*}(Y,\lambda,\Gamma) \to \mathrm{ECH}_{*-2}(Y,\lambda,\Gamma).
\end{equation}

To define this, choose a base point $z\in Y$ which is not on the image of any Reeb orbit and let $J$ be generic.
Then define a map 
\begin{equation}
     U_{J,z}:\mathrm{ECC}_{*}(Y,\lambda,\Gamma) \to \mathrm{ECC}_{*-2}(Y,\lambda,\Gamma)
\end{equation}
by
\begin{equation}
    U_{J,z} \langle \alpha \rangle=\sum_{\beta:\mathrm{admissible\,\,orbit\,\,set\,\,with\,\,}[\beta]=\Gamma} \# \{\,u\in \mathcal{M}_{2}^{J}(\alpha,\beta)/\mathbb{R})\,|\,(0,z)\in u\,\}\cdot \langle \beta \rangle.
\end{equation}

The above map $U_{J,z}$ is a chain map, and we define the $U$ map
as the induced map on homology.  Under the assumption, this map is independent of $z$ (for a generic $J$). See \cite[{\S}2.5]{HT3} for more details. Moreover, in the same reason as $\partial$, $U_{J,z}$ does not depend on $J$ (see Theorem \ref{test}, and see \cite{T1}). In this paper, we choose a suitable generic $J$ as necessary.

The next isomorphism is important.

\begin{them}[\cite{T1}]\label{test}
For each $\Gamma\in H_{1}(Y)$, there is an isomorphism
\begin{equation}
\mathrm{ECH}_{*}(Y,\lambda,\Gamma) \cong \check{HM}_{*}(-Y,\mathfrak{s}(\xi)+\mathrm{PD}(\Gamma))
\end{equation}
of relatively $\mathbb{Z}/d\mathbb{Z}$-graded abelian groups. Here $d$ is the divisibility of $c_{1}(\xi)+2\mathrm{PD}(\Gamma)$ in $H_{1}(Y)$ mod torsion and $\mathfrak{s}(\xi)$ is the
spin-c structure associated to the oriented 2–plane field  as in \cite{KM}.
Moerover, the above isomorphism interchanges
the map $U$ in (\ref{Umap}) with the map
\begin{equation}
   U_{\dag}: \check{HM}_{*}(-Y,\mathfrak{s}(\xi)+\mathrm{PD}(\Gamma)) \longrightarrow \check{HM}_{*-2}(-Y,\mathfrak{s}(\xi)+\mathrm{PD}(\Gamma))
\end{equation}
defined in \cite{KM}.

\end{them}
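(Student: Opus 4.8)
The plan is to follow Taubes' strategy of deforming the three-dimensional Seiberg-Witten equations by a large real parameter $r$ so that their solutions concentrate along Reeb orbits, thereby translating monopole data into ECH data. I work over $\mathbb{Z}/2\mathbb{Z}$, consistent with the chain complex above, so that orientation and sign issues can be set aside (the integral refinement requires a coherent system of orientations and is technically heavier). First I would fix a Riemannian metric $g$ and an almost complex structure compatible with $(\lambda,d\lambda)$ so that $\lambda$ has unit length, $*d\lambda=2\lambda$ after normalization, and $\xi=\mathrm{Ker}\,\lambda$ is the $g$-orthogonal complement of the Reeb field; this is precisely the data identifying $\mathfrak{s}(\xi)$ with the spin-c structure attached to the oriented $2$-plane field. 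I would then introduce the $r$-dependent perturbation in which the curvature equation acquires a term $-ir\lambda$ (with lower-order corrections) and study the moduli spaces of solutions on $Y$, and of instantons on $\mathbb{R}\times Y$, as $r\to\infty$.

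The central analytic input is a \emph{concentration theorem}: as $r\to\infty$, the curvature $F_A$ of any solution concentrates in arbitrarily thin tubular neighborhoods of short Reeb orbits, while the spinor $\psi$ has norm close to $1$ away from these tubes and vanishes transversally along their cores. To make this precise I would first establish uniform a priori estimates — energy bounds and pointwise bounds on $\psi$ — using the local model given by Taubes' vortex equations in the normal plane to an orbit. From these I would prove that, for $r$ sufficiently large, the irreducible solutions in the spin-c structure $\mathfrak{s}(\xi)+\mathrm{PD}(\Gamma)$ are in bijective correspondence with the admissible orbit sets $\alpha$ satisfying $[\alpha]=\Gamma$; the admissibility constraint that each hyperbolic orbit carry multiplicity one emerges from the spectral analysis of the linearization near each orbit, since higher multiplicities at a hyperbolic orbit would force an eigenvalue to cross zero and obstruct the solution.

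Next I would match the differentials. After reversing orientation to pass to $-Y$, the monopole differential counts gradient flow lines of the perturbed Chern-Simons-Dirac functional, equivalently solutions on the symplectization $\mathbb{R}\times Y$. The key step is to show that as $r\to\infty$ these instantons converge, as currents, to $J$-holomorphic curves asymptotic to the relevant orbit sets, and conversely that every rigid $J$-holomorphic curve counted by $\partial_J$ arises as the $r\to\infty$ limit of a unique one-parameter family of Seiberg-Witten instantons. Here Proposition \ref{ind} supplies the ECH-side rigidity, since for $I(u)=1$ the nontrivial part $u_1$ is embedded with $\mathrm{ind}(u_1)=1$, making the count zero-dimensional after quotienting by the $\mathbb{R}$-action. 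Combining the bijection on generators with this flow correspondence yields a chain-level identification inducing the stated isomorphism, and compatibility of the relative $\mathbb{Z}/d\mathbb{Z}$-gradings follows by comparing the ECH index $I$ with the spectral-flow grading on the monopole side, both of which have period equal to the divisibility $d$ of $c_1(\xi)+2\mathrm{PD}(\Gamma)$.

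Finally, to identify the $U$ maps I would bring in the marked point $z\in Y$. The ECH operator $U_{J,z}$ counts $I=2$ curves passing through $(0,z)$, while $U_{\dagger}$ is defined on the monopole side by an analogous point constraint; the same instanton-to-curve convergence, now applied to solutions meeting a fixed fiber over $z$, shows the two counts coincide for large $r$, so the isomorphism intertwines $U$ with $U_{\dagger}$. The hard part throughout is the analysis underlying the concentration and convergence statements — especially the compactness and gluing theory guaranteeing that the generator and flow bijections are \emph{exact}, with no solutions spuriously created or lost in the limit $r\to\infty$, together with the transversality needed for the counts to be well-defined. This analytic core is where essentially all of the difficulty resides and constitutes the bulk of Taubes' multi-paper program.
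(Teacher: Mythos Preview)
The paper does not prove this theorem at all: it is stated as a cited result due to Taubes (the label carries \cite{T1}, and the full isomorphism together with the $U$-map compatibility is developed in \cite{T2} and its sequels), and is then used as a black box in Proposition \ref{isomorhpisms} and elsewhere. There is therefore no ``paper's own proof'' to compare your proposal against.

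That said, your outline is a faithful high-level summary of Taubes' program: the large-$r$ perturbation of the three-dimensional Seiberg--Witten equations, concentration of solutions along Reeb orbits via the vortex model, the bijection between irreducible monopoles and admissible orbit sets, and the instanton-to-holomorphic-curve correspondence on $\mathbb{R}\times Y$ matching both $\partial$ and $U$. You are also right that essentially all the difficulty lies in the analytic core (uniform estimates, compactness, gluing, transversality), which occupies several hundred pages across Taubes' papers. One small correction of emphasis: the admissibility constraint $m_i=1$ at hyperbolic orbits does not arise from an eigenvalue crossing obstructing the existence of a monopole, but rather from the spectral-flow/grading comparison and the structure of the vortex moduli over a hyperbolic orbit; higher multiplicities at a positive hyperbolic orbit correspond on the monopole side to generators that pair off and cancel, which is why only admissible orbit sets survive as genuine generators.
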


Here $\check{HM}_{*}(-Y,\mathfrak{s}(\xi)+2\mathrm{PD}(\Gamma))$ is a version of Seiberg-Witten Floer homology with
$\mathbb{Z}/2\mathbb{Z}$ coefficients defined by Kronheimer-Mrowka \cite{KM}.

The action of an orbit set $\alpha=\{(\alpha_{i},m_{i})\}$ is defined by 
\begin{equation}
A(\alpha)=\sum m_{i}A(\alpha_{i})=\sum m_{i}\int_{\alpha_{i}}\lambda. 
\end{equation}

Note that if two admissible orbit sets $\alpha=\{(\alpha_{i},m_{i})\}$ and $\beta=\{(\beta_{i},n_{i})\}$ have $A(\alpha)\leq A(\beta)$, then the coefficient of $\beta$ in $\partial \alpha$  is $0$ because of the positivity of $J$ holomorphic curves over $d\lambda$ and the fact that $A(\alpha)-A(\beta)$ is equivalent to the integral value of $d\lambda$ over $J$-holomorphic punctured curves which is asymptotic to $\alpha$ at $+\infty$, $\beta$ at $-\infty$.

Suppose that $b_{1}(Y)=0$. In this situation,  for any orbit sets $\alpha$ and $\beta$ with $[\alpha]=[\beta]$, $H_{2}(Y,\alpha,\beta)$ consists of only one component since $H_{2}(Y)=0$. So we may omit the homology component from the notation of ECH index $I$, that is, $I(\alpha,\beta)$ just denotes the  ECH index. Furthermore, for a orbit set $\alpha$ with $[\alpha]=0$, we set $I(\alpha):=I(\alpha,\emptyset)$.

\begin{prp}\label{liftinglinear}
Let $(Y,\lambda)$ be a non-degenerate connected contact three manifold with $b_{1}(Y)=0$. Let $\rho: \Tilde{Y}\to Y$ be a $p$-fold cover with $b_{1}(\Tilde{Y})=0$. Let $(\Tilde{Y},\Tilde{\lambda})$ be  a non-degenerate contact three manifold induced by the covering map. Suppose that $\alpha$ and $\beta$ be admissible orbit sets in $(Y,\lambda)$ consisting of only hyperbolic orbits. 
Then
\begin{equation}
    I(\rho^{*}\alpha,\rho^{*}\beta)=pI(\alpha,\beta)
\end{equation}
Here $\rho^{*}\alpha$ and $\rho^{*}\beta$ are inverse images of $\alpha$, $\beta$ and so admissible orbit sets in $(\Tilde{Y},\Tilde{\lambda})$.
\end{prp}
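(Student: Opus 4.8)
The plan is to fix a trivialization on $\tilde Y$ that is compatible with $\tau$, lift a representative surface through the covering, and then check that each of the three terms in the definition of the ECH index --- the relative Chern number $c_1(\xi|_Z,\tau)$, the relative self-intersection $Q_\tau(Z)$, and the Conley--Zehnder sum --- is simply multiplied by $p$ on passing to the cover.

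First I would record some preliminary structure. Since $b_1(Y)=b_1(\tilde Y)=0$ we have $H_2(Y)=H_2(\tilde Y)=0$, so both $H_2(Y;\alpha,\beta)$ and $H_2(\tilde Y;\rho^*\alpha,\rho^*\beta)$ contain a single class; the latter is nonempty (so that $I(\rho^*\alpha,\rho^*\beta)$ is defined) because the preimage under $\rho$ of a $2$-chain in $Y$ with boundary $\sum_i m_i\alpha_i-\sum_j n_j\beta_j$ is a $2$-chain in $\tilde Y$ with boundary $\rho^*\alpha-\rho^*\beta$, so in particular $[\rho^*\alpha]=[\rho^*\beta]$. Next I would note that each simple orbit $\alpha_i$ (resp. $\beta_j$) has preimage a disjoint union of simple orbits $\tilde\alpha_i^{(k)}$ covering it with degrees $d_{i,k}$ satisfying $\sum_k d_{i,k}=p$; each $\tilde\alpha_i^{(k)}$ is hyperbolic (a power of a hyperbolic return map is hyperbolic) and hence occurs with multiplicity $1$ in $\rho^*\alpha$, consistently with admissibility. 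Since $\xi$ over $\tilde\alpha_i^{(k)}$ is the pullback of $\xi|_{\alpha_i}$ under the $d_{i,k}$-fold cover of the circle, I take as trivialization on $\tilde Y$ the pullback $\rho^*\tau$ of $\tau$ under these circle covers, which is legitimate because $I$ is independent of the trivialization. Finally I fix a representative surface $S\subset[0,1]\times Y$ of the class $Z$ as in Definition~\ref{representative} and set $\tilde S:=(\mathrm{id}\times\rho)^{-1}(S)$; since $\mathrm{id}\times\rho$ is an unbranched $p$-fold cover, $\tilde S$ is again an immersed oriented compact surface, embedded over $(0,1)$ and transverse to the ends, representing the unique class $\tilde Z\in H_2(\tilde Y;\rho^*\alpha,\rho^*\beta)$.

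The Chern and Conley--Zehnder terms are then immediate. The number $c_1(\xi|_Z,\tau)$ is the signed count of zeros of a generic section of $\xi|_S$ that is $\tau$-constant near $\partial S$; its pullback to $\tilde S$ is $\rho^*\tau$-constant near $\partial\tilde S$ and has zero set the full preimage of the original zero set, with matching signs, so $c_1(\rho^*\xi|_{\tilde Z},\rho^*\tau)=p\cdot c_1(\xi|_Z,\tau)$. For the index terms, $\tilde\alpha_i^{(k)}$ has linearized return map the $d_{i,k}$-th power of that of $\alpha_i$, so with respect to $\rho^*\tau$ its Conley--Zehnder index equals $\mu_\tau(\alpha_i^{d_{i,k}})=d_{i,k}\mu_\tau(\alpha_i)$, using the multiplicativity of $\mu_\tau$ for hyperbolic orbits recalled in the preliminaries; summing over $k$ (and $i$), and using $\sum_k d_{i,k}=p$ together with $m_i=n_j=1$, gives $\sum\mu_{\rho^*\tau}(\text{ends of }\rho^*\alpha)=p\sum\mu_\tau(\text{ends of }\alpha)$, and similarly for $\beta$.

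The remaining term $Q_\tau$ is where the real work lies, and I expect the linking part to be the main obstacle. Using the definition with two transverse representatives $S,S'$ of $Z$, so $Q_\tau(Z)=Q_\tau(Z,Z)=-l_\tau(S,S')+\#(S\cap S')$, the interior term scales correctly because each transverse intersection point in $(0,1)\times Y$ has exactly $p$ preimages in $(0,1)\times\tilde Y$, all of the same sign, whence $\#(\tilde S\cap\tilde S')=p\cdot\#(S\cap S')$. For the linking term I would work in a solid-torus neighborhood $S^1\times D^2$ of each orbit, where $l_\tau$ is a sum of linking numbers, measured against $\tau$, of the braids cut out by $S$ and $S'$ on the boundary torus; a neighborhood of $\tilde\alpha_i^{(k)}$ is the $d_{i,k}$-fold cyclic cover of this solid torus in the $S^1$-direction, the relevant braids are the lifted ones, and a direct computation in this local model shows that passing to a $d$-fold cover in the core direction multiplies each such linking number (mutual linking of distinct strands, and self-linking of a single strand) by $d$, since the relevant relative winding numbers pick up a factor $d$. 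Summing over $k$ with $\sum_k d_{i,k}=p$, and over the orbits, gives $l_{\rho^*\tau}(\tilde S,\tilde S')=p\cdot l_\tau(S,S')$, hence $Q_{\rho^*\tau}(\tilde Z)=p\cdot Q_\tau(Z)$. Adding the three contributions yields $I(\rho^*\alpha,\rho^*\beta)=I(\rho^*\alpha,\rho^*\beta,\tilde Z)=p\cdot I(\alpha,\beta,Z)=p\cdot I(\alpha,\beta)$. Everything except the local linking analysis is just the principle that an unbranched $p$-fold cover multiplies signed point counts by $p$, so the one step I would carry out carefully is that model computation near the orbits.
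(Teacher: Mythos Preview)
Your proposal is correct and follows essentially the same approach as the paper's own proof: fix the pullback trivialization, lift a representative surface through the unbranched $p$-fold cover, and verify that each of the three constituents $c_1(\xi|_Z,\tau)$, $Q_\tau(Z)$, and the Conley--Zehnder sum is multiplied by $p$. Your treatment is in fact slightly more careful than the paper's in two respects: you explicitly track the decomposition of $\rho^{-1}(\alpha_i)$ into components of possibly varying degrees $d_{i,k}$ with $\sum_k d_{i,k}=p$, and you isolate the local linking computation for $l_\tau$ as the one step requiring genuine verification, whereas the paper asserts $l_{\tilde\tau}(\tilde S_1,\tilde S_2)=p\,l_\tau(S_1,S_2)$ ``by construction'' without further comment.
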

\begin{proof}[\bf Proof of Proposition \ref{liftinglinear}]
Let $\tau$ be a fixed trivialization of $\xi=\mathrm{Ker}\lambda$ defined over every simple orbit in $(Y,\lambda)$ and $\Tilde{\tau}$ be its induced trivialization of $\Tilde{\xi}=\mathrm{Ker}\Tilde{\lambda}$ defined over every simple orbit in $(\Tilde{Y},\Tilde{\lambda})$. More precisely, it is defined as follows. Let $\Tilde{\gamma}:\mathbb{R}/T\mathbb{Z}\to \Tilde{Y}$ be a  simple orbits. Since $\rho: \Tilde{Y}\to Y$ is a $p$-fold cover, there are a positive integer $r$ and simple orbit  $\gamma:\mathbb{R}/\frac{T}{r}\mathbb{Z}\to Y$ with $\gamma\circ \pi=\rho \circ \Tilde{\gamma}$ where $\pi:\mathbb{R}/T\mathbb{Z}\to \mathbb{R}/\frac{T}{r}\mathbb{Z}$ is the natural projection and $p$ is divisible by $r$. Let $\pi_{*}:\Tilde{\gamma}^{*}\Tilde{\xi}\to \gamma^{*}\xi$ be the induced bundle map. Then for a trivialization $\tau:\gamma^{*}\xi\to \mathbb{R}/\frac{T}{r}\mathbb{Z}\times \mathbb{C}$, we define the induced trivialization $\Tilde{\tau}:\Tilde{\gamma}^{*}\Tilde{\xi}\to \mathbb{R}/T\mathbb{Z}\times \mathbb{C}$ so that $(\pi\times \mathrm{id}) \circ \Tilde{\tau}=\tau \circ \pi_{*}$.

Let $\{Z\}=H_{2}(Y;\alpha,\beta)$ and $\{\Tilde{Z}\}=H_{2}(\Tilde{Y};\rho^{*}\alpha,\rho^{*}\beta)$. It is sufficient to prove that each term in $I(\rho^{*}\alpha,\rho^{*}\beta)$ defined by $\Tilde{\tau}$ and $\Tilde{Z}$ is $p$ times of corresponding term in $I(\alpha,\beta)$ defined by $\tau$ and $Z$.

At first, we consider the term of Conley-Zehnder indices of orbits. As just before Definition \ref{intersection}, for every hyperbolic orbit $\gamma$ in $(Y,\lambda)$, we have $\mu_{\tau}(\gamma^p)=p\mu_{\tau}(\gamma)$ and so for a simple hyperbolic orbit $\gamma$ in $(Y,\lambda)$, we have $p\mu_{\tau}(\gamma)=\mu_{\Tilde{\tau}}(\rho^{*}\gamma)$ (in the right hand side, if several orbits appear in $\rho^{*}\gamma$, we add their Conley-Zehnder indices all together). 

Next, consider the terms $c_{1}(\xi|_{Z},\tau)$. Recall that this is defined as follows. Let $S$ be a surface with boundary and $f:S\to Y$ be a map representing $Z$. Take a generic section $\psi:S\to f^{*}\xi$ such that $\psi|_{\partial S}$ is a nonvanishing $\tau$-trivial section and $\psi$ is  transverse to the zero section (see \cite{H1} for the  definition of $\tau$-trivial).  then $c_{1}(\xi|_{Z},\tau)$ is defined as the signed number of zeroes of $\psi$.

Take $f$, $S$ and $\psi$ as above. Since $\rho: \Tilde{Y}\to Y$ is a $p$-fold cover, we can take  a surface $\Tilde{S}$, a map $\Tilde{f}:\Tilde{S}\to \Tilde{Y}$,  and a $p$-fold cover $i:\Tilde{S} \to S$ so that $f\circ i= \rho \circ \Tilde{f}$. Since $i$ induces a bundle map $i_{*}:\Tilde{f}^{*}\Tilde{\xi}\to f^{*}\xi$, we have an induced section $i^{*}\psi:\Tilde{S}\to \Tilde{f}^{*}\Tilde{\xi}$. We can check directly that $\Tilde{f}:\Tilde{S}\to \Tilde{Y}$ represents $\Tilde{Z}$, $i^{*}\psi|_{\partial \Tilde{S}}$ is a nonvanishing $\Tilde{\tau}$-trivial section and $i^{*}\psi$ is  transverse to the zero section. This implies that $c_{1}(\Tilde{\xi}|_{\Tilde{Z}},\Tilde{\tau})$ is the signed number of zeroes of $i^{*}\psi$ and so by construction we have $c_{1}(\Tilde{\xi}|_{\Tilde{Z}},\Tilde{\tau})=pc_{1}(\xi|_{Z},\tau)$.

Finally, we consider $Q_{\tau}(Z)$.  Let  $f_{1}:S_{1}\to Y$, $f_{2}:S_{2}\to Y$ be two surfaces used to define $Q_{\tau}(Z)$ (see Definition \ref{intersection}). In the same way as  $c_{1}(\xi|_{Z},\tau)$, we can take $p$-fold covering $i_{1}:\Tilde{S_{1}} \to S_{1}$ and $i_{2}:\Tilde{S_{2}} \to S_{2}$ so that $f_{1}\circ i_{1}= \rho \circ \Tilde{f_{1}}$ and $f_{2}\circ i_{2}= \rho \circ \Tilde{f_{2}}$. We can use them to define $Q_{\Tilde{\tau}}(\Tilde{Z})$.  By construction, we have $\#(\Tilde{S_{1}}\cap{\Tilde{S_{2}}})=p\#(S_{1}\cap{S_{2}})$ and $l_{\Tilde{\tau}}(\Tilde{S_{1}},\Tilde{S_{2}})=pl_{\tau}(S_{1},S_{2})$. This implies that $Q_{\Tilde{\tau}}(\Tilde{Z})=pQ_{\tau}(Z)$. 

This completes the proof of Proposition \ref{liftinglinear}. 
\end{proof}

\section{Proof of the results}

There are isomorphisms as follows. 

\begin{prp}\label{isomorhpisms}
\begin{equation}\label{isomors3}
    \mathrm{ECH}(S^{3},\lambda,0)=\mathbb{F}[U^{-1},U]/U\mathbb{F}[U]
\end{equation}
and for any $\Gamma\in H_{1}(L(p,q))$,
\begin{equation}\label{isomorlll3}
    \mathrm{ECH}(L(p,q),\lambda,\Gamma)=\mathbb{F}[U^{-1},U]/U\mathbb{F}[U].
\end{equation}
\end{prp}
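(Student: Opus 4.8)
The plan is to compute these ECH groups via the isomorphism with Seiberg--Witten Floer homology provided by Theorem \ref{test}, rather than attempting a direct Morse-theoretic count of orbit sets. For $Y = S^3$ or $Y = L(p,q)$, the manifold $-Y$ is again $S^3$ or a lens space, and the Seiberg--Witten Floer homology $\check{HM}_*(-Y,\mathfrak{s})$ for each spin-c structure $\mathfrak{s}$ on such an $L$-space is well known: since lens spaces (and $S^3$) are $L$-spaces, $\check{HM}_*(-Y,\mathfrak{s})$ is, up to grading shift, the homology of a single point in each degree pattern, i.e. isomorphic as a module to $\mathbb{F}[U^{-1},U]/U\mathbb{F}[U]$, with the $U_{\dag}$ map acting as the obvious degree $-2$ shift (this is the monopole Floer analogue of the Heegaard Floer computation $HF^+(L(p,q),\mathfrak{s}) \cong \mathcal{T}^+_{(d)}$). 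Under Theorem \ref{test} this transports to ECH together with the $U$-map, giving exactly (\ref{isomors3}) and (\ref{isomorlll3}).

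Concretely, I would first record that $c_1(\xi) + 2\mathrm{PD}(\Gamma)$ is torsion for $Y = S^3$ (trivially) and for $Y = L(p,q)$ (since $H_1$ is finite), so that the relative $\mathbb{Z}$-grading and the $U$-map on ECH exist; in fact the divisibility $d$ in Theorem \ref{test} is the order of $c_1(\xi)+2\mathrm{PD}(\Gamma)$ in $H_1$ mod torsion, which is just $1$ here, so the isomorphism is one of relatively $\mathbb{Z}$-graded groups. Then I would invoke the structure of $\check{HM}$ for these specific three-manifolds: for $S^3$ one has $\check{HM}_*(-S^3,\mathfrak{s}_0) \cong \mathbb{F}[U^{-1},U]/U\mathbb{F}[U]$ with $U_\dag$ the shift, and for $L(p,q)$, because $-L(p,q)$ is again a lens space and lens spaces have standard (``simple'') monopole Floer homology, one gets the same answer for each of the $p$ spin-c structures. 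Finally I would quote that Theorem \ref{test} intertwines $U$ with $U_\dag$, so the module-with-$U$-action structure is preserved, yielding the displayed formulas.

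The main obstacle is not conceptual but one of citation bookkeeping: one must make sure to cite a clean statement of the monopole Floer homology of lens spaces (with its $U_\dag$-module structure and absolute or relative grading) rather than just the Heegaard Floer analogue, since Theorem \ref{test} is stated for $\check{HM}$. If a directly-citable $\check{HM}$ computation for all lens spaces is awkward to locate, the fallback is to deduce it from the fact that lens spaces admit metrics of positive scalar curvature, which forces the Seiberg--Witten moduli spaces to be as simple as possible and makes $\check{HM}$ equal to the ``bar'' version in high degrees and to $\mathbb{F}$ in each low degree — equivalently, that these are monopole $L$-spaces. I would also remark that (\ref{isomors3}) already appears implicitly in the ECH literature (e.g. in \cite{HT3}, where the contact form with two elliptic orbits is used to compute $\mathrm{ECH}(S^3)$ directly), so for $S^3$ one may alternatively cite that direct ECH computation and only use Theorem \ref{test} for the lens space case.
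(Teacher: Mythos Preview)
Your proposal is correct and follows essentially the same approach as the paper: invoke Theorem \ref{test} to transport the question to $\check{HM}$, then quote the known computation for lens spaces (as $L$-spaces, equivalently via Heegaard Floer and the $HF=HM$ isomorphism), and note the alternative direct ECH computation. The paper's own proof is in fact just a terse citation to \cite{KM}, \cite{T2}, \cite{OZ}, \cite{KLT} for exactly this chain of isomorphisms, together with a remark that one can alternatively compute $\mathrm{ECH}$ of $S^3$ and $L(p,q)$ directly via ellipsoids as in \cite{H3}; your writeup is a more carefully justified version of the same argument.
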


\begin{proof}[\bf Proof of Proposition \ref{isomorhpisms}]

These come from the isomorphisms between ECH, Seiberg-Witten Floer homology and Heegaard Floer homology. See Theorem \ref{test}, \cite{KM}, \cite{T2}, \cite{OZ} and \cite{KLT}.

Note that without the isomorphisms to Seiberg-Witten Floer homology,  we can compute  ECH of $S^{3}$ and $L(p,q)$ themselves under ellipsoids (see \cite[Section 3.7]{H3}) and  M. Hutchings gives a sketch of the proof of (\ref{isomors3}) and(\ref{isomorlll3}) including the structure of $U$-map   in a special case (see \cite[Section 4.1]{H3}).

\end{proof}

\begin{lem}\label{meces3}
Suppose that all simple  orbits in $(S^{3}, \lambda)$ are negative hyperbolic.
Then, there is a sequence of admissible orbit sets $\{\alpha_{i}\}_{i=0,1,2,...}$ satisfying the following conditions.

\item[1.] For any admissible orbit set $\alpha$, $\alpha$ is in $\{\alpha_{i}\}_{i=0,1,2,...}$.

\item[2.] $A(\alpha_{i})<A(\alpha_{j})$ if and only if $i<j$.

\item[3.] $I(\alpha_{i},\alpha_{j})=2(i-j)$ for any $i,\,\,j$ .
\end{lem}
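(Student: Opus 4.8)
The plan is to extract the sequence $\{\alpha_i\}$ directly from the structure of $\mathrm{ECH}(S^3,\lambda,0)$ given in Proposition \ref{isomorhpisms}. First I would observe that since every simple orbit is negative hyperbolic, every admissible orbit set $\alpha$ (which by Definition \ref{qdef} then has all multiplicities equal to $1$) contains zero positive hyperbolic orbits, so $\epsilon(\alpha)=0$ for all generators. By the mod-$2$ index property \eqref{mod2}, this forces $I(\alpha,\beta)$ to be even for every pair of admissible orbit sets. In particular the chain complex $\mathrm{ECC}(S^3,\lambda,0)$ is supported entirely in even grading, so the differential $\partial$ vanishes identically and $\mathrm{ECC}(S^3,\lambda,0)\cong \mathrm{ECH}(S^3,\lambda,0)$ as graded vector spaces. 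Comparing with \eqref{isomors3}, the set of generators must be, after choosing the grading normalization, exactly one generator in each grading $0,2,4,6,\dots$ (and none in negative gradings). Call them $\alpha_0,\alpha_1,\alpha_2,\dots$ in order of grading; this gives condition 3, namely $I(\alpha_i,\alpha_j)=2(i-j)$, and condition 1, that every admissible orbit set appears in the list.

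It remains to match the action ordering in condition 2 with the grading ordering. The inequality $A(\alpha_i)<A(\alpha_j)$ when $i<j$ is where the real content lies; the reverse implication then follows formally since the $A$-values are distinct (as the $\alpha_i$ are distinct generators) and the index is strictly monotone in $i$. I would prove action-monotonicity using the $U$-map: by Proposition \ref{isomorhpisms} the $U$-map on $\mathrm{ECH}(S^3,\lambda,0)$ is an isomorphism from grading $2k$ to grading $2k-2$ for all $k\ge 1$, so at the chain level $U_{J,z}\langle\alpha_i\rangle=\langle\alpha_{i-1}\rangle$ (up to the vanishing differential, and choosing representatives correctly). The defining count of $U_{J,z}$ involves an element of $\mathcal{M}_2^J(\alpha_i,\alpha_{i-1})/\mathbb{R}$ passing through $(0,z)$; since this moduli space is nonempty there is a (nonconstant, as $i\ge 1$) $J$-holomorphic curve from $\alpha_i$ to $\alpha_{i-1}$, and positivity of $d\lambda$-energy on such curves gives $A(\alpha_i)>A(\alpha_{i-1})$. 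Iterating yields $A(\alpha_0)<A(\alpha_1)<A(\alpha_2)<\cdots$, which is condition 2.

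The main obstacle I anticipate is the bookkeeping around the grading normalization and the claim that the $U$-map acts as the "shift" on chain-level generators. Strictly speaking $U$ is only defined on homology, but since $\partial=0$ here, homology and the chain complex coincide, and the chain map $U_{J,z}$ (for generic $J$ and $z$) induces it; I would need to be slightly careful that $U_{J,z}\langle\alpha_i\rangle$ is exactly $\langle\alpha_{i-1}\rangle$ rather than a more complicated combination — but since there is exactly one generator in each even grading and $U$ is a degree $-2$ isomorphism, the only possibility is that it sends the unique grading-$2i$ generator to (a nonzero multiple of, hence over $\mathbb{F}$ exactly) the unique grading-$2(i-1)$ generator. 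One subtlety worth flagging: one should check that the relative $\mathbb{Z}$-grading on $\mathrm{ECH}(S^3,\lambda,0)$ is actually absolute here (equivalently, that $c_1(\xi)$ is torsion and the grading can be pinned down so that $\alpha_0=\emptyset$ sits in grading $0$); this is standard for $S^3$ and follows from the identification in Proposition \ref{isomorhpisms} together with the fact that $\emptyset$ is the unique action-minimizing generator. Once these points are settled, conditions 1–3 all drop out of the $U$-module structure.
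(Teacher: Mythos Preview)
Your proposal is correct and follows essentially the same argument as the paper: both use the mod-$2$ parity property \eqref{mod2} to conclude $\partial=0$, identify $\mathrm{ECC}$ with $\mathrm{ECH}\cong\mathbb{F}[U^{-1},U]/U\mathbb{F}[U]$ to get exactly one generator per even grading, and invoke the action-decreasing property of the $U$-map to align action with grading. Your writeup is in fact more explicit than the paper's (which compresses the action-monotonicity step into a single sentence), and the care you take with the chain-level $U_{J,z}$ versus homology-level $U$ is appropriate and not a genuine obstacle.
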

\begin{proof}[\bf Proof of Lemma \ref{meces3}]
The assumption that there is no simple positive hyperbolic orbit means $\partial= 0$ because of the fourth statement in Proposition \ref{indexbasicprop}. So the ECH is isomorphic to a free module generated by all admissible orbit sets over $\mathbb{F}$. 
Moreover, from (\ref{isomors3}), we can see that for every two admissible orbit sets $\alpha$ and $\beta$ with $A(\alpha)>A(\beta)$, $U^{k} \langle \alpha \rangle= \langle \beta \rangle$ for some $k>0$. So for every non negative even number $2i$, there is  exactly one admissible orbit set $\alpha_{i}$ whose ECH index relative to $\emptyset$ is equal to $2i$.  By considering these arguments, we obtain Lemma \ref{meces3}.
\end{proof}

In the same way as before, we also obtain the next Lemma.
\begin{lem}\label{mecelens}
Suppose that all simple  orbits in $(L(p,q), \lambda)$ are negative hyperbolic.
Then, for any $\Gamma\in H_{1}(L(p,q))$, there is a sequence of admissible orbit sets $\{\alpha_{i}^{\Gamma}\}_{i=0,1,2,...}$ satisfying the following conditions.

\item[1.] For any $i=0,1,2,...$, $[\alpha_{i}^{\Gamma}]=\Gamma$ in $H_{1}(L(p,q))$.

\item[2.] For any admissible orbit set $\alpha$ with $[\alpha]=\Gamma$, $\alpha$ is in $\{\alpha_{i}^{\Gamma}\}_{i=0,1,2,...}$.

\item[3.] $A(\alpha_{i}^{\Gamma})<A(\alpha_{j}^{\Gamma})$ if and only if $i<j$.

\item[4.] $I(\alpha_{i}^{\Gamma},\alpha_{j}^{\Gamma})=2(i-j)$ for any $i,\,\,j$ .
\end{lem}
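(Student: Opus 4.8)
The plan is to run the same argument as in Lemma \ref{meces3}, but keeping careful track of the grading by $\Gamma \in H_1(L(p,q))$. First I would observe that the hypothesis -- all simple orbits are negative hyperbolic -- again forces $\partial = 0$, by the fourth statement of Proposition \ref{indexbasicprop} (an index-$1$ curve would have to connect two admissible orbit sets whose counts of positive hyperbolic orbits differ by $1$ mod $2$, but there are no positive hyperbolic orbits at all, so $\epsilon(\alpha) - \epsilon(\beta) \equiv 0$ contradicts $I = 1$). Hence for each $\Gamma$, $\mathrm{ECC}(L(p,q),\lambda,\Gamma)$ is already its own homology, a free $\mathbb{F}$-module with one generator for each admissible orbit set $\alpha$ with $[\alpha] = \Gamma$.

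Next I would invoke Proposition \ref{isomorhpisms}, specifically the isomorphism (\ref{isomorlll3}), which tells us that for each fixed $\Gamma$ the homology $\mathrm{ECH}(L(p,q),\lambda,\Gamma) \cong \mathbb{F}[U^{-1},U]/U\mathbb{F}[U]$ is one-dimensional in each sufficiently negative degree of its relative $\mathbb{Z}$-grading (note $c_1(\xi) + 2\mathrm{PD}(\Gamma)$ is torsion since $H_1(L(p,q))$ is finite, so the relative $\mathbb{Z}$-grading exists) and the $U$-map is an isomorphism from each graded piece to the one two degrees lower. Combined with $\partial = 0$, this says there is exactly one admissible orbit set in class $\Gamma$ in each grading level, and the $U$-map strictly decreases the action (since $U$ is built from index-$2$ curves, which have positive $d\lambda$-energy, so the action of the image is strictly smaller). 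Because $U$ lowers grading by exactly $2$ and the ECH index between two generators equals the grading difference, one gets $I(\alpha,\beta) = 2k$ whenever $U^k\langle\alpha\rangle = \langle\beta\rangle$; since there is exactly one generator per grading and $U$ decreases action, ordering the generators by action gives a sequence $\{\alpha_i^\Gamma\}$ with $I(\alpha_i^\Gamma,\alpha_j^\Gamma) = 2(i-j)$, proving item 4, while item 3 is the ordering convention and items 1--2 follow because every admissible orbit set in class $\Gamma$ is a generator hence appears somewhere in the list.

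The only genuine point requiring care -- and the step I expect to be the mild obstacle -- is verifying that the $U$-map, as an abstract map on $\mathrm{ECH}(L(p,q),\lambda,\Gamma)$, actually realizes the "go to the next lower-action generator" map rather than merely lowering the grading. This is handled exactly as in the $S^3$ case of Lemma \ref{meces3}: given two admissible orbit sets $\alpha,\beta$ with $A(\alpha) > A(\beta)$ and $[\alpha]=[\beta]=\Gamma$, since $\partial = 0$ the homology classes $\langle\alpha\rangle, \langle\beta\rangle$ are nonzero and, by the structure $\mathbb{F}[U^{-1},U]/U\mathbb{F}[U]$ with $U$ an isomorphism in each degree, one has $U^k\langle\alpha\rangle = \langle\beta\rangle$ for the unique $k$ with $2k$ equal to the grading difference; that $k > 0$ follows from the action-decreasing property of $U$ (holomorphic curves contributing to $U_{J,z}$ have nonnegative $d\lambda$-energy, which is strictly positive unless the curve is a union of trivial cylinders, and a union of trivial cylinders cannot pass through the generic base point $(0,z)$). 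This is precisely the reasoning already used, so no new ideas are needed; one just repeats it within each $\Gamma$-summand. Thus the proof of Lemma \ref{mecelens} is a verbatim adaptation of the proof of Lemma \ref{meces3} with the bookkeeping index $\Gamma$ carried along throughout.
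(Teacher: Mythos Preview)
Your proposal is correct and follows exactly the approach the paper intends: the paper's own proof of Lemma \ref{mecelens} is literally the single line ``In the same way as before, we also obtain the next Lemma,'' referring back to the proof of Lemma \ref{meces3}, and you have faithfully reproduced that argument with the $\Gamma$-grading carried along. Your additional remarks about why $U$ strictly decreases action and why $k>0$ are a helpful elaboration but not a departure from the paper's method.
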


\begin{lem}\label{noncontractible}
Suppose that all simple  orbits in $(L(p,q), \lambda)$ are negative hyperbolic. Then there is no contractible simple orbit.
\end{lem}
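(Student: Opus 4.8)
The plan is to argue by contradiction. Suppose all simple orbits of $(L(p,q),\lambda)$ are negative hyperbolic and suppose there exists a contractible simple orbit $\gamma$; since $\gamma$ is negative hyperbolic, $\{(\gamma,1)\}$ is an admissible orbit set with $[\gamma]=0\in H_{1}(L(p,q))$. I would also invoke Lemma \ref{mecelens} for $\Gamma=0$ to get the canonical sequence $\{\alpha_{i}^{0}\}$ of admissible orbit sets in the class $0$, with $I(\alpha_{i}^{0},\alpha_{j}^{0})=2(i-j)$ and $\alpha_{0}^{0}=\emptyset$ (the empty set is the unique lowest-action admissible orbit set in class $0$, so it must be $\alpha_{0}^{0}$). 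The key point will be to exhibit an admissible orbit set in class $0$ whose ECH index relative to $\emptyset$ is \emph{odd}, contradicting the fact that every such orbit set sits in the sequence $\{\alpha_{i}^{0}\}$ and hence has even index relative to $\emptyset$.

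The natural candidate is $\gamma$ itself: by the mod-$2$ index formula \eqref{mod2} in Proposition \ref{indexbasicprop}, $I(\{(\gamma,1)\},\emptyset)\equiv \epsilon(\{(\gamma,1)\})-\epsilon(\emptyset) = 1 \pmod 2$ would already be the contradiction, \emph{provided} $\{(\gamma,1)\}$ and $\emptyset$ are admissible orbit sets in the same homology class — which they are, since $\gamma$ is contractible. Wait: this is almost immediate, so I should double-check whether there is a subtlety. The subtlety is that \eqref{mod2} is stated for admissible orbit sets, and both $\{(\gamma,1)\}$ and $\emptyset$ qualify (the empty set trivially, and $\{(\gamma,1)\}$ because $\gamma$ is hyperbolic with multiplicity $1$). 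So indeed $I(\{(\gamma,1)\})$ is odd. On the other hand, by Lemma \ref{mecelens}(2) applied to $\Gamma = 0$, $\{(\gamma,1)\}$ must coincide with some $\alpha_{k}^{0}$, and then by part (4), $I(\alpha_{k}^{0},\alpha_{0}^{0}) = 2k$ is even, while $I(\alpha_{0}^{0},\emptyset)=I(\emptyset,\emptyset)=0$, so by additivity \eqref{adtiv}, $I(\{(\gamma,1)\},\emptyset) = 2k$ is even — contradiction.

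I expect the only real content to be the bookkeeping: confirming that $\alpha_{0}^{0}=\emptyset$, i.e. that the empty orbit set is genuinely the minimal-action generator in the class $0$. This follows because $A(\emptyset)=0$ is strictly less than $A(\alpha)$ for any nonempty orbit set $\alpha$ (actions of Reeb orbits are positive), combined with Lemma \ref{mecelens}(2)--(3), which forces $\emptyset$ to be $\alpha_{0}^{0}$. With that in hand, the additivity of the ECH index and the mod-$2$ formula close the argument. So the main obstacle, such as it is, is purely to make sure the hypotheses of Proposition \ref{indexbasicprop}(3) and Lemma \ref{mecelens} are all legitimately in force for the orbit sets $\{(\gamma,1)\}$ and $\emptyset$; there is no analytic or geometric difficulty beyond that.
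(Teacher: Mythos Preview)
Your argument has a genuine error in the application of the mod-$2$ index formula. In Proposition \ref{indexbasicprop}(3), the quantity $\epsilon(\alpha)$ is the number of \emph{positive} hyperbolic orbits in $\alpha$, not the number of hyperbolic orbits. By hypothesis $\gamma$ is \emph{negative} hyperbolic, so $\epsilon(\{(\gamma,1)\})=0$, and the formula gives $I(\{(\gamma,1)\},\emptyset)\equiv 0-0\equiv 0\pmod 2$. This is perfectly consistent with $I(\{(\gamma,1)\},\emptyset)=2k$, and no contradiction arises. In fact, under the standing assumption that every simple orbit is negative hyperbolic, $\epsilon$ vanishes on \emph{every} admissible orbit set, so the mod-$2$ parity is always even and can never produce an obstruction of the kind you want. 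This is not a minor bookkeeping slip: it is exactly the reason a parity argument inside $L(p,q)$ cannot work here.

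The paper instead passes to the universal cover $\rho:(S^{3},\Tilde{\lambda})\to (L(p,q),\lambda)$. If $\gamma$ is contractible then $\rho^{-1}(\gamma)$ consists of $p$ distinct simple (negative hyperbolic) orbits in $S^{3}$, and the deck group permutes them, so by symmetry they all have the same ECH index relative to $\emptyset$. But Lemma \ref{meces3} says that in $(S^{3},\Tilde{\lambda})$ the admissible orbit sets are in bijection with the nonnegative even integers via $I(\,\cdot\,,\emptyset)$; in particular no two distinct admissible orbit sets share the same index. Since $p\geq 2$, this is the desired contradiction. The essential idea you are missing is that the obstruction lives upstairs in $S^{3}$, coming from the \emph{injectivity} of the index on generators there, not from any parity downstairs.
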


\begin{proof}[\bf Proof of Lemma \ref{noncontractible}]
Let $\rho:(S^{3},\Tilde{\lambda})\to (L(p,q), \lambda)$ be the covering map where $\Tilde{\lambda}$ is the induced contact form of $\lambda$ by $\rho$. Suppose that there is a contractible simple orbit $\gamma$ in $(L(p,q),\lambda)$. Then the inverse image of $\gamma$ by $\rho$ consists of $p$ simple negative hyperbolic orbits. By symmetry, they have the same ECH index relative to $\emptyset$. This contradicts the results in Lemma \ref{meces3}.
\end{proof}

Recall the covering map $\rho:(S^{3},\Tilde{\lambda})\to (L(p,q), \lambda)$.

 By Lemma \ref{noncontractible}, we can see that under the assumptions, if $p$ is prime, there is an one-to-one correspondence between periodic orbits
in $(S^{3},\Tilde{\lambda})$ and ones in $(L(p,q), \lambda)$. That is, for each orbit $\gamma $ in $(L(p,q), \lambda)$, there is an unique orbit $\Tilde{\gamma}$ in $(S^{3},\Tilde{\lambda})$ satisfying $\rho^{*}\gamma=\Tilde{\gamma}$ and for each orbit $\Tilde{\gamma} $ in $(S^{3},\Tilde{\lambda})$, there is an unique orbit $\gamma$ in $(L(p,q), \lambda)$ satisfying $\rho^{*}\gamma=\Tilde{\gamma}$. As above, we distinguish orbits in $(S^{3},\Tilde{\lambda})$ from ones in  $(L(p,q), \lambda)$ by adding tilde. Furthermore, we also do the same way in orbit sets. That is, for each orbit set $\alpha=\{(\alpha_{i},m_{i})\}$ over $(L(p,q), \lambda)$, we set $\Tilde{\alpha}=\{(\Tilde{\alpha}_{i},m_{i})\}$.

\begin{lem}\label{s3lift}
Under the assumptions and notations in Lemma \ref{mecelens}, there is a labelling $\{\Gamma_{0},\,\Gamma_{1},....,\,\Gamma_{p-1}\}=H_{1}(L(p,q))$ satisfying the following conditions.
\item[1.] $\Gamma_{0}=0$ in $H_{1}(L(p.q))$.
\item[2.] If $A(\Tilde{\alpha}_{i}^{\Gamma_{j}})<A(\Tilde{\alpha}_{i'}^{\Gamma_{j'}})$, then $i<i'$ or $j<j'$.
\item[3.] For any $i=0,1,2....$ and $\Gamma_{j} \in \{\Gamma_{0},\,\Gamma_{1},....,\,\Gamma_{p-1}\}$,  $\frac{1}{2}I(\Tilde{\alpha_{i}}^{\Gamma_{j}})=j$ in $\mathbb{Z}/p\mathbb{Z}$.

\end{lem}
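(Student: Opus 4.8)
The plan is to use Proposition \ref{liftinglinear} to transport the ECH-index information from $(L(p,q),\lambda)$ up to $(S^3,\tilde\lambda)$, and then to compare it with the total order on admissible orbit sets of $(S^3,\tilde\lambda)$ coming from Lemma \ref{meces3}. Concretely, first I would observe that, by Lemma \ref{noncontractible}, lifting is injective on admissible orbit sets: distinct admissible orbit sets $\alpha,\beta$ of $(L(p,q),\lambda)$ (in any homology classes) have distinct lifts $\tilde\alpha,\tilde\beta$, and $[\tilde\alpha]=[\tilde\beta]=0$ in $H_1(S^3)$. Since $\alpha$ and $\beta$ consist only of hyperbolic orbits, Proposition \ref{liftinglinear} gives $I(\tilde\alpha,\tilde\beta)=p\,I(\alpha,\beta)$. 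Also $A(\tilde\alpha)=p\,A(\alpha)$ directly from the definition of the action and the fact that $\rho$ multiplies periods appropriately, so the lift is strictly order-preserving with respect to the action. Hence the family $\{\tilde\alpha_i^{\Gamma} : i\ge 0,\ \Gamma\in H_1(L(p,q))\}$ sits inside the totally ordered sequence $\{\alpha_k\}_{k\ge0}$ of Lemma \ref{meces3} for $(S^3,\tilde\lambda)$, and $I(\tilde\alpha_i^{\Gamma})=p\,I(\alpha_i^{\Gamma},\emptyset)=2pi + c(\Gamma)$ where $c(\Gamma):=I(\tilde\alpha_0^{\Gamma})$ depends only on $\Gamma$ (using additivity of $I$, property (4) of Lemma \ref{mecelens}, and $I(\tilde\alpha_i^\Gamma)-I(\tilde\alpha_0^\Gamma)=p\,I(\alpha_i^\Gamma,\alpha_0^\Gamma)=2pi$; note $c(\Gamma)$ is automatically even and $c(\Gamma_0)=c(0)=0$ since $\alpha_0^0=\emptyset$).

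Next I would pin down the values $c(\Gamma)$. Because for each even $2k\ge 0$ there is in $(S^3,\tilde\lambda)$ exactly one admissible orbit set of ECH index $2k$ (Lemma \ref{meces3}), and every $\tilde\alpha_i^\Gamma$ is such an orbit set, the index function $\Gamma\mapsto c(\Gamma)$ together with $i$ must exactly account for which residues mod $2p$ are hit: for fixed $i$, as $\Gamma$ ranges over $H_1(L(p,q))$ the $p$ numbers $2pi+c(\Gamma)$ are the ECH indices of $p$ distinct admissible orbit sets of $(S^3,\tilde\lambda)$, and by the uniqueness-per-index statement they are distinct; since each $c(\Gamma)$ is even and, as I will argue, $0\le c(\Gamma)<2p$, the set $\{c(\Gamma) : \Gamma\in H_1(L(p,q))\}$ must be exactly $\{0,2,4,\dots,2p-2\}$. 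To see $0\le c(\Gamma)<2p$: the lift $\tilde\alpha_0^\Gamma$ is the action-minimal admissible orbit set in homology class $\Gamma$; one shows its ECH index lies in $[0,2p)$ because otherwise, using that the $U$-map drops index by $2$ and is surjective in the relevant range (Proposition \ref{isomorhpisms}), one could produce an admissible orbit set of $(S^3,\tilde\lambda)$ of smaller action than $\tilde\alpha_0^\Gamma$ but with $[\,\cdot\,]=0$, whose image downstairs would be an admissible orbit set of class $\Gamma$ and smaller action, contradicting minimality of $\alpha_0^\Gamma$. This is the step I expect to require the most care — matching the $\mathbb{Z}$-grading on $(S^3,\tilde\lambda)$-ECH with the decomposition of $(L(p,q),\lambda)$-ECH over $H_1$, and controlling the action of lifts of the action-minimal generators.

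Finally I would define the labelling: declare $\Gamma_j$ to be the homology class with $c(\Gamma_j)=2j$. Then $\Gamma_0=0$ by construction, giving (1). For (3): $\tfrac12 I(\tilde\alpha_i^{\Gamma_j}) = pi + j \equiv j \pmod p$, which is exactly statement (3). For (2): if $A(\tilde\alpha_i^{\Gamma_j}) < A(\tilde\alpha_{i'}^{\Gamma_{j'}})$ then, since the lift is order-preserving and the $\tilde\alpha$'s are totally ordered inside $\{\alpha_k\}$, their ECH indices satisfy $2pi + 2j < 2pi' + 2j'$, i.e. $pi+j < pi'+j'$; as $0\le j,j' \le p-1$ this forces $i<i'$, or $i=i'$ and $j<j'$, which in particular gives "$i<i'$ or $j<j'$", proving (2). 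It remains only to check the labelling is a bijection $\{0,\dots,p-1\}\to H_1(L(p,q))$, which follows since $\Gamma\mapsto c(\Gamma)$ is injective (distinct $\Gamma$ give distinct orbit sets upstairs, hence distinct even indices in $[0,2p)$) and hits all of $\{0,2,\dots,2p-2\}$ by the counting argument above. This completes the proof.
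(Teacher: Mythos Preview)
Your overall strategy matches the paper's: use Proposition~\ref{liftinglinear} to see that $\tfrac12 I(\tilde\alpha_i^{\Gamma})\bmod p$ depends only on $\Gamma$, and then compare with the total ordering from Lemma~\ref{meces3}. However, your middle step (showing $0\le c(\Gamma)<2p$) is where the argument goes wrong, and it is also the step the paper handles completely differently.

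The gap is in your claim that if $c(\Gamma)\ge 2p$ then the $U$-map would produce an admissible orbit set of $(S^3,\tilde\lambda)$ of smaller action ``whose image downstairs would be an admissible orbit set of class $\Gamma$.'' You have only established that lifting is \emph{injective}; you have no reason yet to know that an arbitrary admissible orbit set on $S^3$ is a lift at all, let alone of something in the prescribed class $\Gamma$. So the contradiction does not close. (There is also a minor slip: you write $I(\tilde\alpha_i^{\Gamma})=p\,I(\alpha_i^{\Gamma},\emptyset)$, but $I(\alpha_i^{\Gamma},\emptyset)$ is undefined when $\Gamma\ne 0$; the parenthetical using $I(\alpha_i^{\Gamma},\alpha_0^{\Gamma})$ is the correct version.)

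The paper avoids this entirely by first observing \emph{surjectivity}: by Lemma~\ref{noncontractible} (together with $p$ prime, which is implicit from the paragraph preceding the lemma) every simple orbit of $L(p,q)$ has preimage a single simple orbit of $S^3$, so every simple orbit of $(S^3,\tilde\lambda)$ is $\mathbb{Z}/p\mathbb{Z}$-invariant and hence every admissible orbit set of $(S^3,\tilde\lambda)$ is a lift. Thus $\{\tilde\alpha_i^{\Gamma}\}_{i\ge 0,\ \Gamma}$ is not merely contained in but \emph{equal to} the sequence $\{\alpha_k\}$ of Lemma~\ref{meces3}. Once you know this, the residues $\tfrac12 I(\tilde\alpha_0^{\Gamma})\bmod p$ must exhaust $\{0,1,\dots,p-1\}$ by pigeonhole, and the labelling and properties (1)--(3) follow immediately, exactly as in your final paragraph. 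So the fix is simply to replace your second paragraph with this surjectivity observation.
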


\begin{proof}[\bf Proof of Lemma \ref{s3lift}]
By Proposition \ref{liftinglinear}, for each $\Gamma\in H_{1}(L(p,q))$, $\frac{1}{2}I(\Tilde{\alpha}^{\Gamma}_{i})$ in $\mathbb{Z}/p\mathbb{Z}$ is independent of $i$. Moreover, by Lemma \ref{noncontractible}, every admissible orbit set of $(S^{3},\Tilde{\lambda})$ comes from some of $(L(p,q), \lambda)$ and so in the notation of Lemma \ref{meces3} and Lemma \ref{mecelens}, the set $\{\Tilde{\alpha}_{i}^{\Gamma}\}_{i=0,1,...,\,\,\Gamma\in H_{1}(L(p,q))}$ is exactly equivalent to $\{ \alpha_{i}\}$. These arguments imply Lemma \ref{s3lift} (see the below diagram).
\begin{equation}\label{thediagram}
\xymatrix{
\langle \emptyset = \Tilde{\alpha}_{0}^{\Gamma_{0}} \rangle & \langle \Tilde{\alpha}_{0}^{\Gamma_{1}} \rangle\ar[l]_{U}&\langle \Tilde{\alpha}_{0}^{\Gamma_{2}} \rangle\ar[l]_{U} & \ar[l]_{U}&\ar@{.}[l]&\ar[l]_{U}\langle \Tilde{\alpha}_{0}^{\Gamma_{p-1}} \rangle \\ \langle \Tilde{\alpha}_{1}^{\Gamma_{0}} \rangle \ar[urrrrr]^{U} & \langle \Tilde{\alpha}_{1}^{\Gamma_{1}} \rangle\ar[l]&\langle \Tilde{\alpha}_{1}^{\Gamma_{2}} \rangle\ar[l]_{U} & \ar[l]_{U}&\ar@{.}[l]&\ar[l]_{U}\langle \Tilde{\alpha}_{1}^{\Gamma_{p-1}} \rangle \\\langle \Tilde{\alpha}_{2}^{\Gamma_{0}} \rangle \ar[urrrrr]^{U} & \langle \Tilde{\alpha}_{2}^{\Gamma_{1}} \rangle\ar[l]&\langle \Tilde{\alpha}_{2}^{\Gamma_{2}} \rangle\ar[l]_{U} & \ar[l]_{U}&\ar@{.}[l] }
\end{equation}
\end{proof}

\begin{lem}\label{isomorphismcyclic}
Suppose that all simple orbits in $(L(p,q), \lambda)$ are negative hyperbolic and $p$ is prime. For $\Gamma\in H_{1}(L(p.q))$, we set $f(\Gamma_{j}) \equiv \frac{1}{2}I(\Tilde{\alpha}_{i}^{\Gamma_{j}})=j \in \mathbb{Z}/p\mathbb{Z}$ for some $i\geq 0$. Then this map has to be isomorphism as cyclic groups. Here we note that by Lemma \ref{s3lift}, this map has to be well-defined and a bijective map from $H_{1}(L(p.q))$ to $\mathbb{Z}/p\mathbb{Z}$.  
\end{lem}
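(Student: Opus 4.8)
The plan is to prove that $f$ is a group homomorphism. Since $f\colon H_{1}(L(p,q))\to \mathbb{Z}/p\mathbb{Z}$ is already known to be a bijection with $f(0)=0$ between two groups of order $p$, additivity immediately upgrades it to an isomorphism of cyclic groups. So the whole content is the congruence $f(\Gamma+\Gamma')\equiv f(\Gamma)+f(\Gamma')\pmod p$ for all $\Gamma,\Gamma'\in H_{1}(L(p,q))$.

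The first step is to translate this into a statement about ECH indices of unions of orbit sets, read off on $S^{3}$. By Lemma \ref{mecelens} each class contains infinitely many admissible orbit sets, so I would pick admissible orbit sets $\alpha$ with $[\alpha]=\Gamma$ and $\beta$ with $[\beta]=\Gamma'$ whose supports are disjoint; then $\alpha\cup\beta$ is admissible with $[\alpha\cup\beta]=\Gamma+\Gamma'$, and $\rho^{*}(\alpha\cup\beta)=\rho^{*}\alpha\sqcup\rho^{*}\beta$ in $(S^{3},\Tilde{\lambda})$. Write $\hat I(-):=I_{S^{3}}(\rho^{*}(-),\emptyset)$, which is well defined (and even, since all orbits of $S^3$ are negative hyperbolic) because $H_{1}(S^{3})=0$. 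The relative Chern number and the Conley–Zehnder terms are additive over disjoint unions, and $Q_{\Tilde{\tau}}(Z_{1}+Z_{2})=Q_{\Tilde{\tau}}(Z_{1})+2Q_{\Tilde{\tau}}(Z_{1},Z_{2})+Q_{\Tilde{\tau}}(Z_{2})$, so
\[
\hat I(\alpha\cup\beta)=\hat I(\alpha)+\hat I(\beta)+2\,Q_{\Tilde{\tau}}(\Tilde{Z}_{\alpha},\Tilde{Z}_{\beta}),
\]
where $\Tilde{Z}_{\gamma}\in H_{2}(S^{3};\rho^{*}\gamma,\emptyset)$ is the unique relative class. By Lemma \ref{s3lift}(3), $\tfrac12\hat I$ of an admissible orbit set of class $\Gamma''$ equals $f(\Gamma'')$ in $\mathbb{Z}/p\mathbb{Z}$; dividing the displayed identity by $2$ and reducing mod $p$ therefore gives
\[
f(\Gamma+\Gamma')\equiv f(\Gamma)+f(\Gamma')+B(\Gamma,\Gamma')\pmod p,\qquad B(\Gamma,\Gamma'):=Q_{\Tilde{\tau}}(\Tilde{Z}_{\alpha},\Tilde{Z}_{\beta})\bmod p .
\]

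Next I would show that $B$ is a symmetric $\mathbb{Z}/p\mathbb{Z}$-bilinear form on $H_{1}(L(p,q))$ with $B(0,-)=0$: it is well defined because the left-hand side of the previous congruence does not depend on the choice of disjoint representatives; it is symmetric because $Q_{\Tilde{\tau}}(Z_{1},Z_{2})=Q_{\Tilde{\tau}}(Z_{2},Z_{1})$; and it is additive in each variable because $\Tilde{Z}_{\alpha\cup\alpha'}=\Tilde{Z}_{\alpha}+\Tilde{Z}_{\alpha'}$ for disjoint $\alpha,\alpha'$ together with bilinearity of $Q_{\Tilde{\tau}}$. Iterating the congruence using bilinearity of $B$ gives $f(k\Gamma)\equiv k f(\Gamma)+\binom{k}{2}B(\Gamma,\Gamma)\pmod p$ for every $k\ge 0$. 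Fix $\Gamma\neq 0$; since $p$ is prime, $\Gamma$ generates $H_{1}(L(p,q))$, so $k\mapsto f(k\Gamma)$ is a bijection of $\mathbb{Z}/p\mathbb{Z}$. For $p$ odd, $\binom{k}{2}=\tfrac{k(k-1)}{2}$ makes sense in $\mathbb{Z}/p\mathbb{Z}$ and this bijection is the polynomial map $k\mapsto \tfrac12 B(\Gamma,\Gamma)\,k^{2}+\bigl(f(\Gamma)-\tfrac12 B(\Gamma,\Gamma)\bigr)k$ of degree $\le 2$; a polynomial of degree $\le 2$ can permute $\mathbb{Z}/p\mathbb{Z}$ only if its quadratic coefficient vanishes (as $x\mapsto x^{2}$ is two-to-one off $0$), so $B(\Gamma,\Gamma)=0$ for all $\Gamma$, whence $B\equiv 0$ by symmetry and bilinearity. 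Thus $f$ is additive. (For $p=2$, $H_{1}(L(p,q))=\mathbb{Z}/2\mathbb{Z}$ and there is nothing to prove.)

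The step I expect to be the crux is the verification that the defect $B$ really is a symmetric bilinear form: this requires handling the bilinearity of $Q_{\tau}$ and its behaviour under disjoint unions and under the covering $\rho$ with care, in the same spirit as the proof of Proposition \ref{liftinglinear}. A secondary technical point is that pairwise-disjoint admissible representatives of the relevant homology classes can always be arranged; here one uses that only finitely many simple orbits lie below any given action while each class contains admissible orbit sets of arbitrarily large action, so one can push the supports apart (replacing an orbit set by one of larger action in its class if necessary).
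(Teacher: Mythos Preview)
Your argument is correct and in spirit coincides with the paper's: both isolate a quadratic ``defect'' controlling the failure of $f$ to be additive on multiples of a fixed nonzero class and then use bijectivity of $f$ on $\mathbb{Z}/p\mathbb{Z}$ to force the defect to vanish. The packaging differs. The paper works concretely: by pigeonhole it chooses $p$ distinct \emph{simple} orbits $\gamma_{1},\dots,\gamma_{p}$ in a single class $\Gamma$, sets $l:=\#([0,1]\times\tilde\gamma_{i}\cap C_{\tilde\gamma_{j}})\bmod p$ (shown to equal $f(2\Gamma)-2f(\Gamma)$ and hence to be independent of $i\neq j$), derives the first-difference relation $f(n\Gamma)-f((n-1)\Gamma)=f(\Gamma)+(n-1)l$, and notes that if $l\neq 0$ some consecutive pair coincides, contradicting bijectivity. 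Your version abstracts this into the symmetric bilinear form $B$ and the permutation-polynomial observation, which is a bit cleaner and yields full additivity in one stroke; the price is the extra bookkeeping of producing pairwise-disjoint admissible representatives in arbitrary classes, which the paper sidesteps by staying inside the single class $\Gamma$ with its $p$ ready-made disjoint simple orbits. Summing the paper's recursion gives exactly your formula $f(n\Gamma)=nf(\Gamma)+\binom{n}{2}l$, so the two arguments are the same computation viewed from different angles.
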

\begin{proof}[\bf Proof of Lemma \ref{isomorphismcyclic}]

Since under the assumption there are infinity many simple orbits and $|H_{1}(L(p,q))|<\infty$, we can pick up $p$ different simple periodic orbits $\{\gamma_{1},\,\gamma_{2},....,\gamma_{p}\}$ in $(L(p,q), \lambda)$ with $[\gamma_{1}]=[\gamma_{2}]=...=[\gamma_{p}]=\Gamma$ for some $\Gamma\in H_{1}(L(p,q))$. Since there is no contractible simple orbit (Lemma \ref{noncontractible}), $\Gamma \neq 0$ and so $f(\Gamma)\neq 0$. For $i=1,2,...,p$, let $\Tilde{\gamma}_{i}$ be the corresponding orbit in $(S^{3},\Tilde{\lambda})$ of $\gamma_{i}$ and $C_{\Tilde{\gamma_{i}}}$ be a representative of $Z_{\Tilde{\gamma}_{i}}$ where $\{Z_{\Tilde{\gamma}_{i}}\}=H_{2}(S^{3};\Tilde{\gamma}_{i},\emptyset)$ (see Definition \ref{representative}). 

\begin{cla}\label{claimonly}
Suppose that $1\leq i,j \leq p$ and $i \neq j$. Then
the intersection number $\#([0,1]\times \Tilde{\gamma}_{i}\cap{C_{\Tilde{\gamma}_{j}}})$ in $\mathbb{Z}/p\mathbb{Z}$ does not depend on the choice of $i,j$ where $\#([0,1]\times \Tilde{\gamma}_{i}\cap{C_{\Tilde{\gamma}_{j}}})$ is the algebraic intersection number in $[0,1]\times Y$ (see Definition \ref{intersection}).
\end{cla}
\begin{proof}[\bf Proof of Claim \ref{claimonly}]

By the definition, we have
\begin{equation}
    \frac{1}{2}I(\Tilde{\gamma}_{i}\cup{\Tilde{\gamma}_{j}},\Tilde{\gamma}_{i})=\frac{1}{2}I(\Tilde{\gamma}_{j})+\#([0,1]\times \Tilde{\gamma}_{i}\cap{C_{\Tilde{\gamma}_{j}}})
\end{equation}
So in $\mathbb{Z}/p\mathbb{Z}$,
\begin{equation}
    \begin{split}
        \#([0,1]\times \Tilde{\gamma}_{i}\cap{C_{\Tilde{\gamma}_{j}}})&=\frac{1}{2}I(\Tilde{\gamma}_{i}\cup{\Tilde{\gamma}_{j}},\Tilde{\gamma}_{i})-\frac{1}{2}I(\Tilde{\gamma}_{j})\\
        =&\frac{1}{2}I(\Tilde{\gamma}_{i}\cup{\Tilde{\gamma}_{j}})-\frac{1}{2}I(\Tilde{\gamma}_{i})-\frac{1}{2}I(\Tilde{\gamma}_{j})
        =  f(2\Gamma)-2f(\Gamma)
    \end{split}
\end{equation}
This implies that the value $ \#([0,1]\times \Tilde{\gamma}_{i}\cap{C_{\Tilde{\gamma}_{j}}})$ in  $\mathbb{Z}/p\mathbb{Z}$ depends only on $f(2\Gamma)$ and $f(\Gamma)$. This complete the proof of Claim \ref{claimonly}.
\end{proof}

Return to the proof of Lemma \ref{isomorphismcyclic}. we set $l:= \#([0,1]\times \Tilde{\gamma}_{i}\cap{C_{\Tilde{\gamma}_{j}}}) \in \mathbb{Z}/p\mathbb{Z}$ for $i \neq j$.

In the same way as  Claim \ref{claimonly}, for $1\leq n \leq p$, we have
\begin{equation}
       \frac{1}{2}I(\bigcup_{1\leq i \leq n }\Tilde{\gamma}_{i},\bigcup_{1\leq i \leq n-1}\Tilde{\gamma}_{i})=\frac{1}{2}I(\Tilde{\gamma}_{n})+\sum_{1\leq i \leq n-1} \#([0,1]\times \Tilde{\gamma}_{i}\cap{C_{\Tilde{\gamma}_{n}}})
\end{equation}
and so
\begin{equation}
    f(n\Gamma)-f((n-1)\Gamma)=f(\Gamma)+(n-1)l \,\,\,\,\,\, \mathrm{in}\,\,\mathbb{Z}/p\mathbb{Z}.
\end{equation}

Suppose that $l \neq 0$. Since $p$ is prime, there is $1\leq k \leq p$ such that $f(\Gamma)+(k-1)l=0$. This implies that  $f(k\Gamma)-f((k-1)\Gamma)=0$. But this contradicts the bijectivity of $f$. So $l=0$ and therefore $f(n\Gamma)=nf(\Gamma)$. Since $f(\Gamma)\neq 0$, we have that $f$ is isomorphism.
\end{proof}

\begin{lem}\label{minimalandsecond}
Suppose that all simple  orbits in $(L(p,q), \lambda)$ are negative hyperbolic.
Let $\gamma_{\mathrm{min}}$ and $\gamma_{\mathrm{sec}}$ be orbits with smallest and second smallest actions in $(L(p,q), \lambda)$ respectively. Then,
\begin{equation}\label{index2minsec}
    I(\Tilde{\gamma}_\mathrm{min})=I(\Tilde{\gamma}_{\mathrm{sec}},\Tilde{\gamma}_{\mathrm{min}})=2
\end{equation}
and moreover,
\begin{equation}\label{keyindex}
    6< I(\Tilde{\gamma}_{\mathrm{min}}\cup{\Tilde{\gamma}_{\mathrm{sec}}}) \leq 2p.
\end{equation}

\end{lem}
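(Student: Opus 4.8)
My plan is to transport everything along the covering $\rho$ to $(S^{3},\Tilde{\lambda})$ and read the ECH indices off the explicit description of $\mathrm{ECH}(S^{3},\Tilde{\lambda},0)$ in Lemma \ref{meces3}. For (\ref{index2minsec}): since $p$ is prime and, by Lemma \ref{noncontractible}, no simple orbit of $(L(p,q),\lambda)$ is contractible, every simple orbit represents a generator of $\pi_{1}(L(p,q))\cong\mathbb{Z}/p\mathbb{Z}$, so each $\rho^{*}\gamma=\Tilde{\gamma}$ is a single embedded orbit covering $\gamma$ with multiplicity $p$; hence $A(\Tilde{\alpha})=p\,A(\alpha)$ for every admissible orbit set $\alpha$ and $\alpha\mapsto\Tilde{\alpha}$ is an action-order-preserving bijection from the admissible orbit sets of $(L(p,q),\lambda)$ onto those of $(S^{3},\Tilde{\lambda})$. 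In $(L(p,q),\lambda)$ any admissible orbit set with $\geq 2$ simple orbits has action $\geq A(\gamma_{\mathrm{min}})+A(\gamma_{\mathrm{sec}})>A(\gamma_{\mathrm{sec}})$, so $\emptyset,\{\gamma_{\mathrm{min}}\},\{\gamma_{\mathrm{sec}}\}$ are the three orbit sets of smallest action, in that order; lifting, they are $\alpha_{0},\alpha_{1},\alpha_{2}$ in the enumeration of Lemma \ref{meces3}, whose third property gives $I(\Tilde{\gamma}_{\mathrm{min}})=I(\alpha_{1},\alpha_{0})=2$ and $I(\Tilde{\gamma}_{\mathrm{sec}},\Tilde{\gamma}_{\mathrm{min}})=I(\alpha_{2},\alpha_{1})=2$.

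For (\ref{keyindex}) I would first use additivity of the ECH index (Proposition \ref{indexbasicprop}) together with the identity in the proof of Claim \ref{claimonly} to write $I(\Tilde{\gamma}_{\mathrm{min}}\cup\Tilde{\gamma}_{\mathrm{sec}})=I(\Tilde{\gamma}_{\mathrm{min}}\cup\Tilde{\gamma}_{\mathrm{sec}},\Tilde{\gamma}_{\mathrm{sec}})+I(\Tilde{\gamma}_{\mathrm{sec}})=\bigl(I(\Tilde{\gamma}_{\mathrm{min}})+2m\bigr)+4=6+2m$, where $m=\#([0,1]\times\Tilde{\gamma}_{\mathrm{sec}}\cap C_{\Tilde{\gamma}_{\mathrm{min}}})$ and $C_{\Tilde{\gamma}_{\mathrm{min}}}$ represents the generator of $H_{2}(S^{3};\Tilde{\gamma}_{\mathrm{min}},\emptyset)$. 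Writing $\{\Tilde{\gamma}_{\mathrm{min}},\Tilde{\gamma}_{\mathrm{sec}}\}=\alpha_{k}$ in that enumeration, one gets $I(\Tilde{\gamma}_{\mathrm{min}}\cup\Tilde{\gamma}_{\mathrm{sec}})=2k=2(1+N)$, where $N$ is the number of simple orbits of $(L(p,q),\lambda)$ of action $<A(\gamma_{\mathrm{min}})+A(\gamma_{\mathrm{sec}})$ (anything of smaller action than $\{\gamma_{\mathrm{min}},\gamma_{\mathrm{sec}}\}$ is empty or a single orbit), so $m=N-2\geq 0$. The upper bound $I\leq 2p$, i.e. $N\leq p-1$, I would obtain by showing that $\gamma\mapsto[\gamma]$ is injective on these $N$ orbits and avoids $0$: two distinct such orbits in a common class, say with $A(\gamma)<A(\gamma')$, would give $I(\{\gamma'\},\{\gamma\})\geq 2$ by Lemma \ref{mecelens} and hence $I(\Tilde{\gamma}')\geq I(\Tilde{\gamma})+2p\geq 2p+2$ by Proposition \ref{liftinglinear}, pushing $\{\Tilde{\gamma}'\}$—and with it $\{\Tilde{\gamma}_{\mathrm{min}},\Tilde{\gamma}_{\mathrm{sec}}\}$—past position $p$, and Lemma \ref{noncontractible} excludes the class $0$, leaving at most $p-1$ classes. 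The lower bound $I>6$, i.e. $N\geq 3$, I would obtain by producing a third simple orbit of action $<A(\gamma_{\mathrm{min}})+A(\gamma_{\mathrm{sec}})$: by (\ref{index2minsec}) and Lemma \ref{isomorphismcyclic}, the isomorphism $f$ there satisfies $f([\gamma_{\mathrm{min}}])=1$ and $f([\gamma_{\mathrm{sec}}])=2$, so $\Gamma:=[\gamma_{\mathrm{min}}]+[\gamma_{\mathrm{sec}}]$ has $f(\Gamma)=3$ and is $\neq 0,[\gamma_{\mathrm{min}}],[\gamma_{\mathrm{sec}}]$; by Lemma \ref{s3lift} (diagram (\ref{thediagram})) the orbit set $\alpha_{3}$ is the lift of the minimal-action admissible representative of $\Gamma$, and showing this representative is a single orbit rather than $\{\gamma_{\mathrm{min}},\gamma_{\mathrm{sec}}\}$ itself yields the third orbit and $N\geq 3$.

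The step I expect to be the real obstacle is this last one—ruling out that $\{\gamma_{\mathrm{min}},\gamma_{\mathrm{sec}}\}$ is the minimal-action admissible orbit set in the class $[\gamma_{\mathrm{min}}]+[\gamma_{\mathrm{sec}}]$ (which a priori would force $m=0$ and $I(\Tilde{\gamma}_{\mathrm{min}}\cup\Tilde{\gamma}_{\mathrm{sec}})=6$), and, in the same spirit, pinning the upper estimate exactly to $2p$. This is precisely where the hypothesis that all simple orbits are negative hyperbolic—so that $\partial=0$ and $\mathrm{ECH}$ is the rigid free module of Lemma \ref{meces3}—must be combined with Proposition \ref{liftinglinear} and the cyclic-group structure of Lemma \ref{isomorphismcyclic}. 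Finally, once (\ref{keyindex}) is in hand, its incompatibility with the congruence $\tfrac{1}{2}I(\Tilde{\gamma}_{\mathrm{min}}\cup\Tilde{\gamma}_{\mathrm{sec}})=3+m\equiv f(\Gamma)=3\pmod p$—again from Lemma \ref{isomorphismcyclic}, which forces $m\equiv 0\pmod p$, hence $m\geq p$—is what produces the contradiction that proves Theorem \ref{maintheorem}.
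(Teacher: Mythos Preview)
Your argument for \eqref{index2minsec} and for the upper bound $I(\Tilde{\gamma}_{\mathrm{min}}\cup\Tilde{\gamma}_{\mathrm{sec}})\leq 2p$ is correct and, once unwound, coincides with the paper's: the point is simply that the orbit set sitting at position $p$ in the $S^{3}$ action ordering is $\Tilde{\alpha}_{1}^{\Gamma_{0}}$, which lifts an orbit set in the trivial class of $L(p,q)$ and hence (by Lemma~\ref{noncontractible}) contains at least two simple orbits, forcing $A(\Tilde{\gamma}_{\mathrm{min}}\cup\Tilde{\gamma}_{\mathrm{sec}})\leq A(\Tilde{\alpha}_{1}^{\Gamma_{0}})$.

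The genuine gap is the strict lower bound $I>6$, and your proposed route cannot close it. You want to show that $\alpha_{0}^{\Gamma_{3}}$ is a single orbit rather than $\{\gamma_{\mathrm{min}},\gamma_{\mathrm{sec}}\}$, but your own computation via Lemma~\ref{isomorphismcyclic} gives $[\gamma_{\mathrm{min}}\cup\gamma_{\mathrm{sec}}]=3[\gamma_{\mathrm{min}}]=\Gamma_{3}$, so $\{\gamma_{\mathrm{min}},\gamma_{\mathrm{sec}}\}$ \emph{is} a legitimate candidate for the minimal-action representative of $\Gamma_{3}$, and nothing in the combinatorics of $f$, Lemma~\ref{s3lift}, or Proposition~\ref{liftinglinear} distinguishes it from a hypothetical third single orbit in that class. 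The paper resolves $I\neq 6$ by a geometric argument you do not anticipate: from $U\langle\Tilde{\gamma}_{\mathrm{min}}\rangle=\langle\emptyset\rangle$ one gets, for each generic $z\in S^{3}$, an embedded $J$-holomorphic plane $C_{z}\in\mathcal{M}^{J}(\Tilde{\gamma}_{\mathrm{min}},\emptyset)$ through $(0,z)$; if $I(\Tilde{\gamma}_{\mathrm{min}}\cup\Tilde{\gamma}_{\mathrm{sec}},\Tilde{\gamma}_{\mathrm{sec}})=2$ then Proposition~\ref{ind}(4) forces $(\mathbb{R}\times\Tilde{\gamma}_{\mathrm{sec}})\cap C_{z}=\emptyset$ for every such $z$, but letting $z\to\Tilde{\gamma}_{\mathrm{sec}}$ and using SFT compactness (no breaking is possible since $A(\Tilde{\gamma}_{\mathrm{min}})$ is minimal) produces a limit plane $C_{\infty}$ that \emph{does} meet $\mathbb{R}\times\Tilde{\gamma}_{\mathrm{sec}}$, a contradiction. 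In your language this is exactly the statement $m>0$, but it is a positivity-of-intersections/compactness fact about holomorphic curves, not something extractable from the index bookkeeping alone.
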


\begin{proof}[\bf Proof of Lemma \ref{minimalandsecond}]
Consider the diagram (\ref{thediagram}) and Lemma \ref{meces3}. As admissible orbit sets, $\Tilde{\gamma}_{\mathrm{min}}$ and  $\Tilde{\gamma}_{\mathrm{sec}}$ correspond to $\Tilde{\alpha}_{0}^{\Gamma_{1}}$ and $\Tilde{\alpha}_{0}^{\Gamma_{2}}$ respectively. This implies (\ref{index2minsec}).

Next, we show the inequality (\ref{keyindex}). See $\Tilde{\alpha}_{1}^{\Gamma_{0}}$ in the diagram (\ref{thediagram}). By the diagram, $I(\Tilde{\alpha}_{1}^{\Gamma_{0}})=2p$. Moreover, this comes from $\alpha_{1}^{\Gamma_{0}}$ with $[\alpha_{1}^{\Gamma_{0}}]=0\in H_{1}(L(p,q))$. By Lemma \ref{noncontractible}, $\alpha_{1}^{\Gamma_{0}}$ has to consist of at least two negative hyperbolic orbits. This implies that $A(\Tilde{\gamma}_{\mathrm{min}}\cup{\Tilde{\gamma}_{\mathrm{sec}}})\leq A(\Tilde{\alpha}_{1}^{\Gamma_{0}})$ and so by Lemma \ref{meces3}, $I(\Tilde{\gamma}_{\mathrm{min}}\cup{\Tilde{\gamma}_{\mathrm{sec}}})\leq I(\Tilde{\alpha}_{1}^{\Gamma_{0}})=2p$. 

Considering the above argument and Lemma \ref{meces3}, it is enough to show that $I(\Tilde{\gamma}_{\mathrm{min}}\cup{\Tilde{\gamma}_{\mathrm{sec}}}) \neq 6$. We prove this by contradiction.
Suppose that $I(\Tilde{\gamma}_{\mathrm{min}}\cup{\Tilde{\gamma}_{\mathrm{sec}}}) = 6$.  Since $\Tilde{\gamma}_{\mathrm{sec}}$ corresponds to $\Tilde{\alpha}_{0}^{\Gamma_{2}}$, we have $I(\Tilde{\gamma}_{\mathrm{sec}})=4$ and so $I(\Tilde{\gamma}_{\mathrm{min}}\cup{\Tilde{\gamma}_{\mathrm{sec}}}, \Tilde{\gamma}_{sec})=2$.  
To consider the $U$-map, fix a generic almost complex structure $J$ on $\mathbb{R}\times S^{3}$. Consider the $U$-map $U\langle \Tilde{\alpha}_{0}^{\Gamma_{1}}=\Tilde{\gamma}_{\mathrm{min}} \rangle=\langle \emptyset \rangle$. This implies that for each generic point $z\in S^{3}$, there is an embedded $J$-holomorphic curve $C_{z}\in \mathcal{M}^{J}(\Tilde{\gamma}_{\mathrm{min}},\emptyset)$ through $(0,z)\in \mathbb{R}\times S^{3}$. By using this $C_{z}$, we have
\begin{equation}\label{rhs}
    I(\Tilde{\gamma}_{\mathrm{min}}\cup{\Tilde{\gamma}_{\mathrm{sec}}}, \Tilde{\gamma}_{\mathrm{sec}})=I(\mathbb{R}\times \Tilde{\gamma}_{\mathrm{sec}}\cup{C_{z}}).
\end{equation}
Note that the right hand side of (\ref{rhs}) is the ECH index of holomorphic curves $\mathbb{R}\times \Tilde{\gamma}_{\mathrm{sec}}\cup{C_{z}}$ (see just before (\ref{just})).
Since $ I(\Tilde{\gamma}_{\mathrm{min}}\cup{\Tilde{\gamma}_{\mathrm{sec}}})=2$ and Proposition \ref{ind}, we have $\mathbb{R}\times \Tilde{\gamma}_{\mathrm{sec}}\cap{C_{z}}=\emptyset$. 

Consider a sequence of holomorphic curves $C_{z}$ as $z \to \Tilde{\gamma}_{\mathrm{sec}}$. By the compactness argument, this sequence has a convergent subsequence and this has a limiting holomorphic curve $C_{\infty}$ which may be splitting into more than one floor. But in this case, $C_{\infty}$ can not split because the action of the positive end of $C_{\infty}$ is smallest value $A(\Tilde{\gamma}_{\mathrm{min}})$. This implies that  $\mathbb{R}\times \Tilde{\gamma}_{\mathrm{sec}}\cap{C_{\infty}} \neq \emptyset$. Since $I(\Tilde{\gamma}_{\mathrm{min}}\cup{\Tilde{\gamma}_{\mathrm{sec}}}, \Tilde{\gamma}_{\mathrm{sec}})=I(\mathbb{R}\times \Tilde{\gamma}_{\mathrm{sec}}\cup{C_{\infty}}) =2$, $\mathbb{R}\times \Tilde{\gamma}_{\mathrm{sec}}\cap{C_{\infty}} \neq \emptyset$ contradicts  the fourth statement in Proposition \ref{ind}. Therefore, we have $I(\Tilde{\gamma}_{\mathrm{min}}\cup{\Tilde{\gamma}_{\mathrm{sec}}}) \neq 6$ and thus complete the proof of Lemma \ref{minimalandsecond}.
\end{proof}

\begin{proof}[\bf Proof of Theorem \ref{maintheorem}]

We may assume that $p$ is prime because the condition that all simple periodic orbits are negative hyperbolic does not change under taking odd-fold covering. By Lemma \ref{isomorphismcyclic} and (\ref{index2minsec}), we have $[\gamma_{\mathrm{sec}}]=2[\gamma_{\mathrm{min}}]$ and so $[\gamma_{\mathrm{min}}\cup{\gamma_{\mathrm{sec}}}]=3[\gamma_{\mathrm{min}}]$ in $H_{1}(L(p,q))$. Since $f$ is isomorphic, we have $\frac{1}{2}I(\Tilde{\gamma}_{\mathrm{min}}\cup{\Tilde{\gamma}_{\mathrm{sec}}})=3$ in $\mathbb{Z}/p\mathbb{Z}$. But this can not occur  in the range of (\ref{keyindex}). This is a contradiction and we complete the proof of Theorem \ref{maintheorem}.
\end{proof}

\begin{proof}[\bf Proof of Theorem \ref{z2act}]
We prove this by contradiction.
By Theorem \ref{elliptic}, we may assume that there is no elliptic orbit and so that all simple orbits are negative hyperbolic.
In the same as Lemma \ref{meces3}, there is  exactly one admissible orbit set $\alpha_{i}$ whose ECH index relative to $\emptyset$ is equal to $2i$.  If there is a non $\mathbb{Z}/2\mathbb{Z}$-invariant orbit $\gamma$, by symmetry there
are two orbit with the same ECH index relative to $\emptyset$. this is a contradiction. So we may assume that  all simple orbits are $\mathbb{Z}/2\mathbb{Z}$-invariant.

Let $(\mathbb{RP}^3,\lambda')$ be the non-degenerate contact three manifold obtained as the quotient space of $(S^{3},\lambda)$ and $\gamma$ be a $\mathbb{Z}/2\mathbb{Z}$-invariant periodic orbit. Then, this orbit corresponds to a double covering of a non-contractible orbit $\gamma'$ in $(\mathbb{RP}^3,\lambda')$.  This implies that the eigenvalues of the return map of $\gamma$ are square of the ones of $\gamma'$. This means that  the eigenvalues of the return map of $\gamma$ are both positive and so $\gamma$ is positive hyperbolic.  This is a contradiction and so we complete the proof of Theorem \ref{z2act}.
\end{proof}

\begin{proof}[\bf Proof of Corollary \ref{allact}]
Note that if $(L(p,q), \lambda)$ has a simple positive hyperbolic orbit, then its covering space $(S^{3},\Tilde{\lambda})$ also has a simple positive
hyperbolic orbit. Considering a non-trivial cyclic subgroup acting on the contact three sphere together with Corollary \ref{periodic} and Theorem \ref{z2act}, we complete the proof of Corollary \ref{allact}.
\end{proof}

\end{document}